\documentclass[12pt]{article}

\usepackage{amssymb,amsthm,MnSymbol,intcalc}
\usepackage{tikz}
\usepackage[dvipsnames]{xcolor}
\usepackage{subcaption}
\usepackage{graphicx}
\usetikzlibrary{calc,decorations.pathreplacing,decorations.markings,arrows,graphs}
\usepackage[margin=2cm]{geometry}
\usepackage[hidelinks]{hyperref}
\usepackage[capitalise,nameinlink]{cleveref}

\newtheorem{thm}{Theorem}[section]
\newtheorem{lem}[thm]{Lemma}
\newtheorem{claim}{Claim}

\newtheorem{cor}[thm]{Corollary}

\theoremstyle{definition}

\newcommand{\mcon}{blue}
\newcommand{\mctw}{red}
\newcommand{\mcth}{green}
\newcommand{\mcfo}{magenta}
\newcommand{\mcfi}{cyan}
\newcommand{\mcsi}{yellow}
\newcommand{\mcse}{brown}
\newcommand{\cms}{\text{cms}}

\newcommand{\fS}{\mathcal{S}}
\newcommand{\fH}{\mathcal{H}}

\newcommand{\co}{\text{co}}

\newcommand{\ola}{\overleftarrow{C}}
\newcommand{\ora}{\overrightarrow{C}}
\pgfmathsetmacro{\scale}{.4}
\pgfmathsetmacro{\hscale}{.625}
\pgfmathsetmacro{\hlscale}{1.5}
\pgfmathsetlengthmacro\sf{0.75cm}
\pgfmathsetlengthmacro\r{1.41*\sf}
\pgfmathsetlengthmacro\s{1.63*\sf}
\pgfmathsetlengthmacro\lwid{0.15*\sf}
\pgfmathsetlengthmacro\cwid{0.1*\sf}
\pgfmathsetlengthmacro\inse{0.125*\sf}  \pgfmathsetlengthmacro\inci{0.20*\sf}
\pgfmathsetmacro{\a}{60}
\author{Adam Mammoliti \thanks{
School of Mathematics and Statistics
UNSW Sydney
NSW 2052, Australia
}\\
\texttt{adam.mammoliti@outlook.com.au}
}
\title{On the Hamiltonicity, traceability, and toughness of complements of line graphs. }
\begin{document}
\maketitle
\begin{abstract}
A coline graph $\co(G)$ of a graph $G$ is the graph with vertex set $E(G)$ for which two vertices $e$ and $e'$ of $\co(G)$ are adjacent if and only if they are not adjacent as edges in $G$. A graph $G$ is tough if the number of connected components of $G-S$ is at most $|S|$ for all cut sets $S$. Wu and Meng, and Liu independently gave similar characterisations of coline graphs that are Hamiltonian.
In this paper we give an alternate proof of Wu and Meng's and Liu's results using the longest cycle method.  We in fact prove the following reformulation of their results. A tough coline graph $\co(G)$ is Hamiltonian unless $G$ is one of four examples, one of which is $K_5$, since $\co(K_5)$ is the Petersen graph. Characterisations of tough coline graphs and of coline graphs that contain a Hamiltonian path are also given.
\end{abstract}

\bigskip\noindent \textbf{Keywords:}
Hamiltonicity, toughness,  traceability, longest cycle, $H$-free graphs, cyclic matching sequenceability.

\noindent {\bf MSC subject classifications: 05C38, 05C45, 05C75  }

\section{Introduction}
The line graph of a graph $G=(V,E)$, denoted $L(G)$, is the graph with vertex set $E$ such that $ee'$ is an edge in $L$ if and only if $e$ and $e'$ are adjacent in $G$. The complement $\overline{G}$ of a graph $G=(V,E)$ is the graph with vertex set $V$ such that $vw$ is an edge in $\overline{G}$ if and only if $vw$ is not an edge in $G$. 
Here we mainly consider the complement of line graphs, which we call {\em coline graphs} for short. For a graph $G$, we let $\co(G) = \overline{L(G)}$ denote its coline graph. Alternatively, the coline graph $\co(G)$ of a graph $G=(V,E)$ can be defined directly as the graph with vertex set $E$ for which $ee'$ is an edge in $\co(G)$ if and only if $e$ and $e'$ are \emph{not} adjacent in $G$. For any line graph or coline graph $L$, we call $G$ a {\em line root} or a {\em coline root} of $L$ if $L(G)=L$ or $\co(G) = L$, respectively.

A {\em Hamiltonian cycle} of a graph $G$ is a spanning cycle and a graph containing a Hamiltonian cycle is called {\em Hamiltonian}. Similarly, a {\em Hamiltonian path} of a graph $G$ is a spanning path and a graph containing a Hamiltonian path is called {\em traceable}.
A (vertex) {\em  cutset} $S$ of a graph $G$ is a subset of its vertices for which $G-S$ is disconnected, where $G-S$ is the graph formed from $G$ by removing the vertices in $S$ and the edges incident to at least one vertex in $S$.
A graph is {\em $t$-tough} if for every cutset $S$, $c(G-S)\leq |S|/t$, where $c(H)$ is the number of connected components of a graph $H$. 
Chv\'atal~\cite{Ch73} conjectured that $t$-toughness implies Hamiltonicity for a sufficiently large $t$. 
Here we mainly focus on $1$-tough graphs which we just call tough. Note that every tough graph $G$ must be $2$-connected, since $G$ cannot contain a cut vertex.

Here we consider the Hamiltonicity, traceability and toughness of coline graphs. Note that for any graph $G$, $\co(G\cup K_1) = \co(G)$. So, without loss of generality, we only consider graphs with no trivial components.
We let $K_3^+$ be the graph obtained from $K_3$ by adding one new vertex that is only adjacent to one vertex in $K_3$. We also let $K_3 \circ K_1$ be the graph
obtained from $K_3$ by adding for every vertex $v$ of $K_3$ some new vertex $w$ only 
adjacent to $v$.
Chartrand,~Hevia,~Jarrett and~Schultz~\cite{ChHeJaSc97} gave several sufficient conditions that guarantee a coline graph is Hamiltonian and made two conjectures about weaker sufficient conditions that still guarantee Hamiltonicity of coline graphs.
Wu and Meng~\cite{WuMe04} proved the following, thus confirming one of Chartrand et al~\cite{ChHeJaSc97} conjectures and disproving the other.
\begin{thm}\label{t:WuMe}
Let $G$ be a graph with $m$ edges and maximum degree $\Delta$. Then $\co(G)$ is Hamiltonian unless at least one of the following is satisfied
\begin{itemize}
\item[(i)] $m < 2\Delta$;
\item[(ii)] $m=2\Delta$ and two vertices of degree $\Delta$ are adjacent in $G$;
\item[(iii)] $G$ is isomorphic to one of $K_3\cup P_3$, $K_3 \cup 2K_2$ and $C_4\cup K_2$;
\item[(iv)] $G$ has a subgraph isomorphic to\\
$
\begin{cases}
K_3^+ &\text{ if }m=6 \\
K_4^- \text{ or } K_3\circ K_1 &\text{ if }m=7 \\
K_4 &\text{ if }m=8 
\end{cases}
$
\item[(v)] $G$ is isomorphic to $K_5$,
\end{itemize}
\end{thm}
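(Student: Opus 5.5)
We prove the contrapositive by the longest cycle method: suppose $G$ satisfies none of (i)--(v), so that $m\ge 2\Delta$, with equality only if no two vertices of degree $\Delta$ are adjacent. Suppose also, for a contradiction, that $\co(G)$ is not Hamiltonian. The first step is to record what the degree hypothesis gives. An edge $e=uv$ has degree $m-d(u)-d(v)+1$ in $\co(G)$. Since $m\ge 2\Delta$, this is at least $1$, and it can equal $1$ only when $u,v$ are adjacent vertices of degree $\Delta$ and $m=2\Delta$, which is excluded. Hence $\delta(\co(G))\ge 2$. More useful is the following consequence of $m\ge 2\Delta$: for any vertex $v$, the set of edges not at $v$ has size at least $m/2$. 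So independent sets in $\co(G)$ are small: such a set is a star or a triangle of $G$, of size at most $\max(\Delta,3)\le m/2$ when $m\ge 6$.

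Next we dispose of small cases by direct inspection. These are the graphs with $m\le 8$, where the exceptions (iii) and (iv) live, together with $K_5$ and the other graphs with $m\le 10$ that are close to it. This is a finite check. The forbidden subgraphs in (iv) are exactly the configurations that produce a triangle-plus-star independent structure making $\co(G)$ non-tough; for example, $K_3^+$ with $m=6$ leaves too few edges outside the triangle. In the proof proper we will use these exclusions to run the main argument only for $m\ge 9$ (or $m\ge 11$, if needed), handling the remaining graphs by hand.

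For the main argument, let $C$ be a longest cycle in $\co(G)$ and let $e\notin C$. Since $\co(G)$ is connected (this follows from the degree bound together with the small independence number), we may choose $e$ adjacent to $C$. In the standard way, the successors $x^+$ on $C$ of the $\co(G)$-neighbours $x$ of $e$, together with $e$ itself, form an independent set in $\co(G)$. If two of these successors were adjacent, the cycle could be rerouted through $e$ and lengthened. Independent sets in $\co(G)$ are stars or triangles of $G$, so all of these successors are pairwise adjacent edges of $G$, and each is also adjacent to $e$. There are two cases.
\begin{itemize}
\item If they form a star at a vertex $w$ containing $e$, then every $\co(G)$-neighbour of $e$ precedes an edge at $w$. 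This gives $m-d(u)-d(v)+1\le d(w)-1$.
\item If they form a triangle, then $e$ has at most two neighbours on $C$.
\end{itemize}
We sharpen this with a second, symmetric application using predecessors $x^-$, and if necessary with a Pósa-type rotation to obtain several extremal edges $e$. Combining the two bounds should force $d(u)+d(v)+d(w)\ge m+2$ with $w\in\{u,v\}$, and ultimately $m<2\Delta$ or $m=2\Delta$ with adjacent maximum-degree vertices, contradicting the hypotheses. In the triangle case, the degree bound forces $m$ small, which is handled by the inspection step.

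I expect the main obstacle to be the star case when $m=2\Delta$ exactly, or when $G$ has several vertices of large degree, since there the counting is tight and the naive inequality does not immediately contradict the hypotheses. There I would first try to exploit both $x^+$ and $x^-$ simultaneously, which yields two stars centred at possibly different ends of $e$. Then I would count edges of $G$ avoiding both centres to find a chord of $C$ that gives a longer cycle. I would also verify carefully that the residual tight configurations are exactly the graphs listed in (iii)--(v).
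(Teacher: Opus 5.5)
Your proposal has a genuine gap at its central step. The successor-star bound $m-d(u)-d(v)+1\le d(w)-1$ with $w\in\{u,v\}$ gives only $m+2\le d(u)+d(v)+d(w)\le 3\Delta$. The predecessor version, whether its star is centred at the same end of $e$ or the other, yields nothing beyond $m\le 3\Delta-2$. That is consistent with $m\ge 2\Delta$ for every $\Delta\ge 2$, so ``combining the two bounds should force \dots'' carries the whole proof, and a count that sees only the $d$ successors and predecessors cannot supply it.

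The missing idea is Case~1 of the paper's proof of Theorem~\ref{t:main}: count along the \emph{whole} cycle. Two consecutive vertices of $C$ that are both non-neighbours of $e$ are, as edges of $G$, adjacent to $e$ but not to each other, so they meet $e$ at different ends. Hence on every segment $x_i^+\ora x_{i+1}^-$ between consecutive neighbours of $e$ the ends alternate. If $N(e)^+$ and $N(e)^-$ are stars at the same end $w$, then exactly half of $V(C)$, plus every vertex off $C$, are edges at $w$. This gives $2d(w)\ge m+1$, contradicting $m\ge 2\Delta$.

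When the two stars sit at different ends of $e$, this alternation count is balanced and gives no contradiction. There the paper uses the rerouting Lemmas~\ref{l:common_arg} and~\ref{l:nonXYedgesExtendCycle} to force $d=3$ and a $K_5$ on $N(e)^{\pm}\cup\{e\}$ (Case~2). That is how exception (v) actually arises.

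The other steps also fail as written.
\begin{itemize}
\item \textbf{Off-cycle neighbours.} Your inequality assumes every $\co(G)$-neighbour of $e$ lies on $C$, and choosing $e$ adjacent to $C$ does not ensure this. The paper first proves that every component of $\co(G)-V(C)$ is trivial (Lemma~\ref{l:eachCompTrivial}, via the $\rho$-counts of Lemma~\ref{l:disconColineGraph}). Under your hypotheses that argument ends exactly in the configurations of (iv), but it has to be carried out.
\item \textbf{Triangle case.} There $\deg(e)=2$ only says $d(u)+d(v)=m-1$. This holds with $m=2\Delta$, $d(u)=\Delta$, $d(v)=\Delta-1$ for every $\Delta$, so $m$ is not forced to be small. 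The paper needs a separate cycle-surgery argument for $d=2$ (Case~3) together with Claim~\ref{cl:xi+xi-}, and the claim is exactly where $H_1,H_2,H_3$ appear.
\item \textbf{Reading of (iv).} Describing (iv) as ``exactly the configurations making $\co(G)$ non-tough'' is wrong. $\co(H_1),\co(H_2),\co(H_3)$ (the $m=7$ graphs containing $K_3\circ K_1$) are tough but not Hamiltonian (Lemma~\ref{l:Hs}), as is the Petersen graph. These exceptions have to come out of the cycle argument itself, not from an unperformed inspection of all graphs with $m\le 10$.
\end{itemize}
In the paper the statement follows from two results. Corollary~\ref{c:exceptions} shows a non-tough $\co(G)$ forces (i), (ii), or one of 18 explicit graphs, each satisfying (iii) or (iv). Theorem~\ref{t:main} then handles the tough case.
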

\noindent
where $P_n$ is the path on $n$ vertices and for any graphs $H$ and $H'$ and integer $k\geq 2$, $H \cup H'$ denotes the graph consisting exactly of vertex disjoint copies of $H$ and $H'$ and $kH = \cup_{i=1}^k H $.

Liu~\cite{Li05} quotes a result in an unpublished paper of theirs which also characterised Hamiltonian coline graphs; see Theorem 1 and Figure 1 in \cite{Li05}.
From this one can show that there are in fact 21 graphs that satisfy (iii)--(iv)  but neither (i) or (ii) of \cref{t:WuMe} namely the three graphs in Figure~\ref{f:graphH} and the 18 graphs in Figure~\ref{f:exceptions}. Here we will provide a proof of this independent from \cref{t:WuMe} and Liu's paper; see \cref{c:exceptions}.
Liu~\cite{Li05} also characterises graphs whose coline graphs are {\em pancyclic} and {\em bi-pancyclic}, which are graphs with cycles of all possible lengths and bipartite graphs with cycles of all possible even lengths, respectively.

One of the main goals of this paper is to give a more straightforward proof of \cref{t:WuMe} using the longest cycle method.
We in fact prove the following reformulation of \cref{t:WuMe}, where $H_1$, $H_2$ and $H_3$ are the graphs depicted in Figure~\ref{f:graphH}.
\begin{thm}\label{t:main}
Let $G$ be a graph and $L$ be its coline graph. If $L$ is tough then $L$ is Hamiltonian unless $G$ is isomorphic to $K_5$, $H_1$, $H_2$ or $H_3$.
\end{thm}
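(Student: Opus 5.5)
We argue by the longest cycle method. Assume $L=\co(G)$ is tough but not Hamiltonian, and that $G$ has no trivial components. Since $L$ is tough it is $2$-connected, so it contains a cycle. Let $C$ be a longest cycle of $L$, with a fixed orientation, and write $e^+$ and $e^-$ for the successor and predecessor of a vertex $e$ on $C$. There is a vertex $f\in E(G)\setminus V(C)$. The engine of the argument is the following translation: two edges of $G$ are adjacent in $L$ exactly when they are disjoint in $G$. A non-neighbour of $f$ in $L$ is therefore an edge of $G$ meeting one of the two endpoints $x,y$ of $f$. So the non-neighbourhood of $f$ is contained in $E_G(x)\cup E_G(y)$, which is a union of two stars, and in particular consists of at most two cliques of $\overline{L}$. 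This very restricted complement structure is what we exploit throughout.

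\textbf{Step 1 (standard extremality facts).} By maximality of $C$, and since $L$ is $2$-connected, the usual arguments give the following. If $f$ is adjacent in $L$ to two cycle vertices $e_1,e_2$, then $e_1^+$ and $e_2^+$ are non-adjacent in $L$; otherwise rerouting through $f$ lengthens $C$. Likewise $f$ is not adjacent to any $e_i^+$, and similarly with $-$ in place of $+$. Translated into $G$: the edges $e_i^+$ pairwise share a vertex, so they form a star or a triangle in $G$. Each $e_i^+$ also meets $f$, so it contains $x$ or $y$.

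\textbf{Step 2 (bound the neighbourhood).} Let $N$ be the set of $L$-neighbours of $f$ on $C$. By Step 1, $N^+$ is an independent set in $L$ and is contained in $E_G(x)\cup E_G(y)$. We also use toughness: $S=N$ separates $f$ from $N^+$ (or at least from $C-N$) when $N^+\cup\{f\}$ is independent. This gives $c(L-N)\ge |N|+1$ whenever $N^+\cup\{f\}$ is independent, which contradicts toughness. The contradiction is therefore unavailable only in degenerate situations. These are: $N^+$ is not genuinely independent together with $f$ because $|C|$ is small, or the sets $N^+$ and $N^-$ interact in a specific way. Since an independent set in $L$ is a clique in $L(G)$, it is a star or a triangle in $G$. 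Hence $N^+$ has size at most $\Delta(G)$, or size $3$ forming a triangle. Combined with the fact that $f$ is adjacent to every edge of $G$ avoiding $x$ and $y$, this forces $G$ to have very few edges outside $E(x)\cup E(y)$. It also forces $C$ to miss only a bounded number of edges, so $m$ is bounded by a small constant.

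\textbf{Step 3 (finite analysis).} Once $|E(G)|$ is bounded, roughly $m\le 10$, we carry out a case analysis on the structure found in Step 2. The two cases are whether $N^+$ forms a triangle in $G$ or a star, and in the star case whether its centre is $x$, $y$, or elsewhere. In each case we either exhibit a longer cycle or a toughness violation, or identify $G$. The triangle case and the case $m=10$ with all degrees $4$ should lead to $K_5$, whose coline graph is the Petersen graph. The remaining tight configurations should yield $H_1$, $H_2$ and $H_3$.

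I expect Step 3 to be the main obstacle. Turning the local information into a complete list of exceptions requires careful bookkeeping. We must check that every small candidate $G$ with tough $\co(G)$ either has a Hamiltonian cycle that can be written down explicitly, or is one of the four exceptions. The check that $\co(K_5)$, $\co(H_1)$, $\co(H_2)$ and $\co(H_3)$ are tough but non-Hamiltonian is routine; it only shows that the exceptions are needed.
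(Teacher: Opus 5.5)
\textbf{There is a genuine gap, and it is in Step 2, which carries the whole argument.}

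\emph{The toughness inference is false.} You claim that independence of $N^+\cup\{f\}$ gives $c(L-N)\ge |N|+1$. Deleting $N$ does isolate $f$ and cuts $C$ into $|N|$ arcs. But independence of $N^+\cup\{f\}$ only forbids edges between the \emph{first} vertices of different arcs. Interior vertices of different arcs may be joined by chords of $C$, or through other vertices off $C$, so the arcs need not be separate components.

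Step 1 shows $N^+\cup\{f\}$ is \emph{always} independent for a longest cycle. So your inference would prove that every tough graph is Hamiltonian. The Petersen graph $\co(K_5)$ refutes this: take a vertex $f$ missed by a $9$-cycle; deleting its three neighbours leaves only two components, $\{f\}$ and a $6$-cycle.

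\emph{Nothing bounds $|E(G)|$.} The ``degenerate situations'' you fall back on are never identified, and the claimed constant bound on $|E(G)|$ has no support. $|N|=|N^+|$ can be as large as $\Delta$, and nothing you say controls $|E_G(x)\cup E_G(y)|$. A bound like $m\le 10$ is essentially the content of the theorem, so Step 3 has nothing to stand on.

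\emph{An unproved assumption about $f$.} You tacitly need all neighbours of $f$, and at least two of them, to lie on $C$. That requires the component of $f$ in $L-V(C)$ to be trivial, which does not follow from $2$-connectedness alone. The paper proves it separately as Lemma~\ref{l:eachCompTrivial}: two disjoint edges in one component force $N_L(H)^+$ into an $F_k$-configuration, and then a $\rho$-count gives a contradiction.

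\textbf{What the paper uses instead.} Two ingredients replace your finiteness claim, and you use neither.
First, consecutive vertices of $C$ outside $N$ are disjoint edges of $G$ that each meet $f$. So along every arc they alternate between the two endpoints $x,y$ of $f$.
Second, toughness is exploited not through the cutset $N$ but through cutsets $E(G)\setminus E(G')$, where $G'$ is a subgraph with disconnected coline graph. This is Lemma~\ref{l:disconColineGraph} together with the quantity $\rho$.

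The paper first proves a preliminary claim: $e_i^+e_i^-\notin E(L)$ for every $e_i\in N$, and its failure forces $G\in\{H_1,H_2,H_3\}$. It then splits into three cases.
\begin{enumerate}
\item $N^+\cup\{f\}$ and $N^-\cup\{f\}$ are stars at the same endpoint. The alternation gives that endpoint degree at least $(m+1)/2$ in $G$, contradicting $m\ge\rho(K_{1,\Delta})=2\Delta$.
\item They are stars at different endpoints and $|N|\ge 3$. Lemma~\ref{l:common_arg}, with the preliminary claim, forces $|N|=3$ and makes $\{f\}\cup N\cup N^\pm$ the edge set of a $K_5$, whence $G=K_5$. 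Note that $K_5$ does not arise from your triangle case: a triangle only occurs when $|N|=2$.
\item $|N|=2$. Finitely many local configurations remain, each killed by $\rho(F_k)$, $\rho(K_4^-)$, $\rho(K_{1,\lambda})$, or an explicit longer cycle.
\end{enumerate}
Without some mechanism of this kind, your outline does not amount to a proof.
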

\begin{center}
\begin{figure}
  \centering
\begin{minipage}{\textwidth}
    \centering
    \begin{subfigure}[b]{0.15\textwidth}
\begin{tikzpicture}[scale=\hscale]

 \draw[line width = \lwid, color = \mcon]{
(-\s,-\s)-- (0,0.73*\s)
};
 \draw[line width = \lwid, color = \mctw]{
(-\s,-\s)-- (\s,-\s)
};
\draw[line width = \lwid, color = \mcth]{
(\s,-\s)-- (0,0.73*\s)
};
\draw[line width = \lwid, color = \mcfo]{
(-\s,-\s)--(-2*\s,0.73*\s)
};
\draw[line width = \lwid, color = \mcfi]{
(0,0.73*\s)--(2*\s,0.73*\s)
};
\draw[line width = \lwid, color = \mcsi]{
(\s,-\s)--(0,-2.73*\s)
};
\draw[line width = \lwid, color = \mcse]{
 (2*\s,0.73*\s)  edge[bend left=45] (0,-2.73*\s)
};
\draw[line width = \cwid, color = black!50]{ 
(-\s,-\s)  node[circle, draw, fill=black!10,inner sep=\inse, minimum width=\inci] {}
(\s,-\s) node[circle, draw, fill=black!10,inner sep=\inse, minimum width=\inci] {}
(0,0.73*\s) node[circle, draw, fill=black!10,inner sep=\inse, minimum width=\inci] {}
(-\s,-\s)  node[circle, draw, fill=black!10,inner sep=\inse, minimum width=\inci] {}
(0,-2.73*\s) node[circle, draw, fill=black!10,inner sep=\inse, minimum width=\inci] {}
(-2*\s,0.73*\s) node[circle, draw, fill=black!10,inner sep=\inse, minimum width=\inci] {}
(2*\s,.73*\s) node[circle, draw, fill=black!10,inner sep=\inse, minimum width=\inci] {}
};
\end{tikzpicture}
\end{subfigure}
\hspace{30mm}
\begin{subfigure}[b]{0.15\textwidth}
\begin{tikzpicture}[scale=\hscale]

 \draw[line width = \lwid, color = \mcon]{
(-\s,-\s)-- (0,0.73*\s)
};
 \draw[line width = \lwid, color = \mctw]{
(-\s,-\s)-- (\s,-\s)
};
\draw[line width = \lwid, color = \mcth]{
(\s,-\s)-- (0,0.73*\s)
};
\draw[line width = \lwid, color = \mcfo]{
(-\s,-\s)--(-2*\s,0.73*\s)
};
\draw[line width = \lwid, color = \mcfi]{
(0,0.73*\s)--(2*\s,0.73*\s)
};
\draw[line width = \lwid, color = \mcsi]{
(\s,-\s)--(0,-2.73*\s)
};
\draw[line width = \lwid, color = \mcse]{
(2*\s,.73*\s)--(3*\s,-\s)
};

\draw[line width = \cwid, color = black!50]{ 
(-\s,-\s)  node[circle, draw, fill=black!10,inner sep=\inse, minimum width=\inci] {}
(\s,-\s) node[circle, draw, fill=black!10,inner sep=\inse, minimum width=\inci] {}
(0,0.73*\s) node[circle, draw, fill=black!10,inner sep=\inse, minimum width=\inci] {}
(-\s,-\s)  node[circle, draw, fill=black!10,inner sep=\inse, minimum width=\inci] {}
(3*\s,-\s)  node[circle, draw, fill=black!10,inner sep=\sf*0.125, minimum width=\inci] {}
(0,-2.73*\s) node[circle, draw, fill=black!10,inner sep=\inse, minimum width=\inci] {}
(-2*\s,0.73*\s) node[circle, draw, fill=black!10,inner sep=\inse, minimum width=\inci] {}
(2*\s,.73*\s) node[circle, draw, fill=black!10,inner sep=\inse, minimum width=\inci] {}
};
\end{tikzpicture}
\end{subfigure}
\hspace{30mm}
\begin{subfigure}[b]{0.15\textwidth}
\begin{tikzpicture}[scale=\hscale]
 \draw[line width = \lwid, color = \mcon]{
(-\s,-\s)-- (0,0.73*\s)
};
 \draw[line width = \lwid, color = \mctw]{
(-\s,-\s)-- (\s,-\s)
};
\draw[line width = \lwid, color = \mcth]{
(\s,-\s)-- (0,0.73*\s)
};
\draw[line width = \lwid, color = \mcfo]{
(-\s,-\s)--(-2*\s,0.73*\s)
};
\draw[line width = \lwid, color = \mcfi]{
(0,0.73*\s)--(2*\s,0.73*\s)
};
\draw[line width = \lwid, color = \mcsi]{
(\s,-\s)--(0,-2.73*\s)
};
\draw[line width = \lwid, color = \mcse]{
(3*\s,-\s)--(2*\s,-2.73*\s)
};  
 \draw[line width = \lwid, color = blue]{
};
\draw[line width = \cwid, color = black!50]{ 
(-\s,-\s)  node[circle, draw, fill=black!10,inner sep=\inse, minimum width=\inci] {}
(\s,-\s) node[circle, draw, fill=black!10,inner sep=\inse, minimum width=\inci] {}
(0,0.73*\s) node[circle, draw, fill=black!10,inner sep=\inse, minimum width=\inci] {}
(-\s,-\s)  node[circle, draw, fill=black!10,inner sep=\inse, minimum width=\inci] {}
(3*\s,-\s)  node[circle, draw, fill=black!10,inner sep=\sf*0.125, minimum width=\inci] {}
(0,-2.73*\s) node[circle, draw, fill=black!10,inner sep=\inse, minimum width=\inci] {}
(-2*\s,0.73*\s) node[circle, draw, fill=black!10,inner sep=\inse, minimum width=\inci] {}
(2*\s,.73*\s) node[circle, draw, fill=black!10,inner sep=\inse, minimum width=\inci] {}
(2*\s,-2.73*\s)node[circle, draw, fill=black!10,inner sep=\inse, minimum width=\inci] {}
};
\end{tikzpicture}
\end{subfigure}
\hfill
\caption{The graphs $H_1$, $H_2$, $H_3$}
\label{f:graphH}
  \end{minipage}
\begin{minipage}{\textwidth}
    \centering
\begin{subfigure}[b]{0.15\textwidth}
\begin{tikzpicture}[scale=\hlscale]
 \draw[line width = \lwid, color = black]{
(0,0)--(90+1*\a:\s)
(0,0)--(90+3*\a:\s)
(0,0)--(90-1*\a:\s)
(0,0)--(90-2*\a:\s)
(90+0*\a:\s)--(90+1*\a:\s)
(90+2*\a:\s)--(90+3*\a:\s)
(90-2*\a:\s)--(90-1*\a:\s)
(90+0*\a:\s)--(90+2*\a:\s)
(90+0*\a:\s)--(90-2*\a:\s)
(90+2*\a:\s)--(90-2*\a:\s)
};
\draw[line width = \cwid, color = \mcse]{ 
(0,0)node[circle, draw, fill=\mcse!60,inner sep=\inse, minimum width=\inci] {}
};
\draw[line width = \cwid, color = \mcsi]{ 
(90:\s)node[circle, draw, fill=\mcsi!60,inner sep=\inse, minimum width=\inci] {}
};
\draw[line width = \cwid, color = \mcon]{ 
(90+\a:\s)node[circle, draw, fill=\mcon!60,inner sep=\inse, minimum width=\inci] {}
};
\draw[line width = \cwid, color = \mcfi]{ 
(90+2*\a:\s)node[circle, draw, fill=\mcfi!60,inner sep=\inse, minimum width=\inci] {}
};
\draw[line width = \cwid, color = \mctw]{ 
(90+3*\a:\s)node[circle, draw, fill=\mctw!60,inner sep=\inse, minimum width=\inci] {}
};
\draw[line width = \cwid, color = \mcth]{ 
(90-1*\a:\s)node[circle, draw, fill=\mcth!60,inner sep=\inse, minimum width=\inci] {}
};
\draw[line width = \cwid, color = \mcfo]{ 
(90-2*\a:\s)node[circle, draw, fill=\mcfo!60,inner sep=\inse, minimum width=\inci] {}
};
\end{tikzpicture}
\end{subfigure}
\hspace{30mm}
\begin{subfigure}[b]{0.15\textwidth}
\begin{tikzpicture}[scale=\hlscale]
 \draw[line width = \lwid, color = black!]{
(0,0)--(90+0*\a:\s)
(0,0)--(90+1*\a:\s)
(0,0)--(90+3*\a:\s)
(0,0)--(90-1*\a:\s)
(0,0)--(90-2*\a:\s)
(90+0*\a:\s)--(90+1*\a:\s)
(90+2*\a:\s)--(90+3*\a:\s)
(90-2*\a:\s)--(90-1*\a:\s)
(90+0*\a:\s)--(90+2*\a:\s)
(90+0*\a:\s)--(90-2*\a:\s)
(90+2*\a:\s)--(90-2*\a:\s)
};
\draw[line width = \cwid, color = \mcse]{ 
(0,0)node[circle, draw, fill=\mcse!60,inner sep=\inse, minimum width=\inci] {}
};
\draw[line width = \cwid, color = \mcsi]{ 
(90:\s)node[circle, draw, fill=\mcsi!60,inner sep=\inse, minimum width=\inci] {}
};
\draw[line width = \cwid, color = \mcon]{ 
(90+\a:\s)node[circle, draw, fill=\mcon!60,inner sep=\inse, minimum width=\inci] {}
};
\draw[line width = \cwid, color = \mcfi]{ 
(90+2*\a:\s)node[circle, draw, fill=\mcfi!60,inner sep=\inse, minimum width=\inci] {}
};
\draw[line width = \cwid, color = \mctw]{ 
(90+3*\a:\s)node[circle, draw, fill=\mctw!60,inner sep=\inse, minimum width=\inci] {}
};
\draw[line width = \cwid, color = \mcth]{ 
(90-1*\a:\s)node[circle, draw, fill=\mcth!60,inner sep=\inse, minimum width=\inci] {}
};
\draw[line width = \cwid, color = \mcfo]{ 
(90-2*\a:\s)node[circle, draw, fill=\mcfo!60,inner sep=\inse, minimum width=\inci] {}
};
\end{tikzpicture}
\end{subfigure}
\hspace{30mm}
\begin{subfigure}[b]{0.15\textwidth}
\begin{tikzpicture}[scale=\hlscale]
  \pgfmathsetmacro{\a}{60}
 \draw[line width = \lwid, color = black]{
(0,0)--(90+0*\a:\s)
(0,0)--(90+1*\a:\s)
(0,0)--(90+2*\a:\s)
(0,0)--(90+3*\a:\s)
(0,0)--(90-1*\a:\s)
(0,0)--(90-2*\a:\s)
(90+0*\a:\s)--(90+1*\a:\s)
(90+2*\a:\s)--(90+3*\a:\s)
(90-2*\a:\s)--(90-1*\a:\s)
(90+0*\a:\s)--(90+2*\a:\s)
(90+0*\a:\s)--(90-2*\a:\s)
(90+2*\a:\s)--(90-2*\a:\s)
};
\draw[line width = \cwid, color = \mcse]{ 
(0,0)node[circle, draw, fill=\mcse!60,inner sep=\inse, minimum width=\inci] {}
};
\draw[line width = \cwid, color = \mcsi]{ 
(90:\s)node[circle, draw, fill=\mcsi!60,inner sep=\inse, minimum width=\inci] {}
};
\draw[line width = \cwid, color = \mcon]{ 
(90+\a:\s)node[circle, draw, fill=\mcon!60,inner sep=\inse, minimum width=\inci] {}
};
\draw[line width = \cwid, color = \mcfi]{ 
(90+2*\a:\s)node[circle, draw, fill=\mcfi!60,inner sep=\inse, minimum width=\inci] {}
};
\draw[line width = \cwid, color = \mctw]{ 
(90+3*\a:\s)node[circle, draw, fill=\mctw!60,inner sep=\inse, minimum width=\inci] {}
};
\draw[line width = \cwid, color = \mcth]{ 
(90-1*\a:\s)node[circle, draw, fill=\mcth!60,inner sep=\inse, minimum width=\inci] {}
};
\draw[line width = \cwid, color = \mcfo]{ 
(90-2*\a:\s)node[circle, draw, fill=\mcfo!60,inner sep=\inse, minimum width=\inci] {}
};
\end{tikzpicture}
\end{subfigure}
\end{minipage}
\caption{The coline graphs of $H_1$, $H_2$ and $H_3$}
\label{f:colineH}
\end{figure}
\end{center}
\vspace{-10mm}
Line graphs and to an extent coline graphs have been well studied in their own right; see for example \cite{BeBa21}.
It is therefore natural to consider Hamiltonicity of line graphs and coline graphs.
Here we give two other motivations for studying Hamiltonicity of coline graphs.
The $k$-th power of a graph $G$ is the graph $G^k$ whose vertex set is $V(G)$ for which two vertices $v$ and $w$ are adjacent if and only if their distance in $G$ is at most $k$. 
The main motivation of the author for studying the Hamiltonicity of coline graphs is its relation to the so called {\em cyclic matching sequenceability} of a graph $G$, denoted $\cms(G)$, a positive integer-valued graph parameter;
see \cite{BrKiMeSc12} or \cite{HoMa21}  for a precise definition of cyclic matching sequenceability and see \cite{HoMa21}  for a brief overview of previous results on cyclic matching sequenceability. 
It can be shown that $\cms(G) = k$ if and only if $\co(G)$ contains the $(k-1)$-th power of a Hamiltonian cycle;
Chiba and Nakano~\cite{ChNa16} first observed a similar fact for a natural non-cyclic variant of cyclic matching sequenceability. Given that $H^1 =H$ for all graphs $H$, determining for what graphs $G$ is  $\co(G)$ is Hamiltonian is equivalent to determining for what graphs $G$ is $\cms(G)\geq 2$, which is of interest since 2 is the smallest non-trivial value the cyclic matching sequenceability of a graph can be because $\cms(G) \geq 1$ is true for all graphs $G$.

For a set of graphs $\fH$, a graph $G$ is $\fH$-free if it is does not contain an induced subgraph isomorphic to $H$ for all $H \in \fH$. For brevity, we write $H$-free rather than $\{H\}$-free.
The Hamiltonicity of $\fH$-free graphs and $H$-free graphs has been considered for various sets of graphs $\fH$ and graphs $H$. We briefly go through several graphs $H$ most relevant to us here.
It has been shown \cite{Be70} (see also \cite{BeBa21} and \cite{RyVr11} ) that a graph $G$ is the line graph of some graph if and only if it is $\fS$-free for a particular set $\fS$ of 9 graphs. It follows that a graph is a coline graph of some graph if and only if it is $\overline{\fS}$-free, where $\overline{\fS}$ is the set of the complements of the graphs in $\fS$. Two of the graphs in $\overline{\fS}$ are $K_2 \cup 3K_1$, the graph with a single edge and 3 isolated vertices, and $P_2 \cup P_3$, the graph consisting of two vertex disjoint paths on 2 and 3 vertices, respectively.
Shan~\cite{Sh21} proved that every $15$-tough $(P_2 \cup P_3)$-free is Hamiltonian, which appears to be the strongest result on Hamiltonicity $(P_2 \cup P_3)$-free graphs to date. 
Xu, Li and Zhou~\cite{XuLiZh24} proved that any $\max\{2k-2,2\}$-connected $(K_2\cup kK_1)$-free graph is Hamiltonian, generalising results in \cite{HaEl} and \cite{ShSh22}, where a graph $G$ is $k$-connected if the minimum size of a vertex cutset of $G$ is at least $k$. More recently, Ota and Sanka~\cite{OtSa24} improved Xu, Li and Zhou's result by proving the following.
\begin{thm}[\cite{OtSa24}]\label{t:OtSa24}
Let $k \geq 2$ and $G$ be a tough $k$-connected $(K_2\cup kK_1)$-free graph with minimum degree at least $\frac{3(k-1)}{2}$. Then $G$ is Hamiltonian or is the Petersen graph.
\end{thm}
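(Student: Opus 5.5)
The plan is to use the longest cycle method. Suppose $G$ is not Hamiltonian and let $C$ be a longest cycle of $G$. Fix an orientation of $C$ and write $x^+$ and $x^-$ for the successor and predecessor of $x$ on $C$. Let $H$ be a component of $G-V(C)$ and let $x_1,\dots,x_s$ be the vertices of $C$ with a neighbour in $H$, listed in cyclic order. Since $G$ is $k$-connected and $C$ has more than $k$ vertices (otherwise $G$ would contain a longer cycle), we get $s\ge k$.

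The standard maximality arguments give the following facts.
\begin{itemize}
\item[(a)] No $x_i^+$ has a neighbour in $H$.
\item[(b)] The set $X^+=\{x_1^+,\dots,x_s^+\}$ is independent.
\item[(c)] For any $i\neq j$ there are no crossing chords of the form $x_i^+x_j^{++}$ together with $x_i x_j$-type rerouting.
\end{itemize}
The same facts hold with predecessors in place of successors.

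\textbf{Step 1: $H$ is a single vertex.} If $H$ contains an edge $hh'$, then by (a) and (b) any $k$ vertices of $X^+$ are independent and nonadjacent to both $h$ and $h'$. Together with $hh'$ they induce $K_2\cup kK_1$, a contradiction. So every component of $G-V(C)$ is a single vertex $h$, and $s=\deg(h)\ge \max\{k,\frac{3(k-1)}{2}\}$.

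\textbf{Step 2: find a forbidden $K_2\cup kK_1$ near $C$.} The set $I=X^+\cup\{h\}$ is independent of size $\deg(h)+1$. I will look for an edge $e$ of $G$ and $k$ vertices of $I$ that avoid both ends of $e$. The natural candidates for $e$ are $x_i^+x_i^{++}$ and $x_ix_i^+$. By (c), a vertex $x_i^{++}$ can be adjacent to at most a bounded number of vertices of $X^+$; otherwise one reroutes to get a longer cycle. For the edge $x_i^+x_i^{++}$, the only vertices of $I$ it can hit are $x_i^+$ itself and the members of $X^+$ adjacent to $x_i^{++}$. Counting these over all $i$ shows that some such edge misses at least $s+1-k$ vertices of $I$ unless the neighbourhoods are very structured. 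This is where the degree hypothesis $\delta\ge \frac{3(k-1)}{2}$ enters: applied to the vertices $x_i^{++}$ and to the vertices of $X^+$, it forces many edges from $X^+\cup X^-$ into the segments of $C$. Pigeonholing these edges then yields either a longer cycle, via a Bondy-type crossing argument, or an edge missing $k$ vertices of $I$.

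\textbf{Step 3: the residual structure.} The leftover configurations are those in which each segment of $C$ between consecutive $x_i$ is very short and every $x_i^{++}$ (equivalently $x_{i+1}^-$) sees exactly the right number of vertices of $X^+$. In these cases I would use toughness. Deleting a suitable set $S\subseteq V(C)$, for instance $\{x_1,\dots,x_s\}$, isolates $h$ and separates the segments. This gives $c(G-S)>|S|$ unless the graph closes up in a very rigid way. Analysing that rigid case for small $k$, with $k=3$ and $\delta=3$, produces exactly the Petersen graph.

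I expect Step 2 to be the main obstacle: the counting has to be tight enough to leave only configurations that Step 3 can handle. I would first try double counting the pairs $(x_i^{++},x_j^+)$ with $x_i^{++}x_j^+\in E(G)$, using the fact that two such pairs with crossing indices give a longer cycle. This should bound each $x_i^{++}$ to roughly $\deg(h)-k$ neighbours in $X^+$ whenever no forbidden subgraph appears, and comparing this bound with the minimum degree condition should give the contradiction.
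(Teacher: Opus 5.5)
\textbf{Where the proposal stands.} The paper does not prove this statement: it quotes it from Ota and Sanka and explicitly avoids using it, so your proposal has to stand on its own. Your Step 1 is correct. It is the general-graph version of the paper's lemma that components of $L-V(C)$ are trivial. The right reason for $s\ge k$ is that $N(H)$ separates $H$ from $x_1^+\notin N(H)$, not the remark about $C$ having more than $k$ vertices. Everything after Step 1, however, is a plan rather than an argument, and the one concrete mechanism you propose does not work.

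\textbf{The crossing claim in Step 2 fails.} Your fact (c), and the claim that $x_i^{++}$ sees only boundedly many vertices of $X^+$, do not follow from maximality of $C$. An edge $x_i^{++}x_j^+$ only yields $hx_i\overleftarrow{C}x_j^+x_i^{++}\overrightarrow{C}x_jh$, which has the same length as $C$ (it drops $x_i^+$). Even a crossing pair $x_i^{++}x_j^+$, $x_j^{++}x_i^+$ gives nothing. Take long segments, no other chords, and $h$ adjacent only to $x_i,x_j$. The degree-two vertices force $hx_i$, $hx_j$ and both arcs. This excludes $x_ix_i^+$ and $x_jx_j^+$, so $x_i^+$ and $x_j^+$ both need $x_i^{++}$, which already uses $x_i^{++}x_i^{+3}$. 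Hence there is no cycle through $V(C)\cup\{h\}$. The valid crossing lemmas involve an edge $ww^+$ of $C$ and two vertices of $X^+$ (or one of $X^-$ and one of $X^+$, as in \cref{l:nonXYedgesExtendCycle}), not two vertices $x_i^{++},x_j^{++}$.

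\textbf{The inequality in Step 2 points the wrong way.} Since $I$ is independent, $(K_2\cup kK_1)$-freeness applied to the edge $x_i^+x_i^{++}$ forces $x_i^{++}$ to have \emph{at least} $s-k+1$ neighbours in $I\setminus\{x_i^+\}$. It does not bound $x_i^{++}$ to roughly $\deg(h)-k$ neighbours. A contradiction therefore needs an upper bound from the cycle structure that is strictly below this forced lower bound, and the proposal never produces one. In particular, $\delta\ge\frac{3(k-1)}{2}$, which only gives $|I|\ge\frac{3k-1}{2}$, never enters an actual inequality.

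\textbf{Step 3 does not use toughness effectively.} Deleting $S=\{x_1,\dots,x_s\}$ isolates $h$, but the segments $x_i^+\overrightarrow{C}x_{i+1}^-$ are joined by chords. Indeed, the forced adjacencies above join each $x_i^{++}$ to many $x_j^+$ in other segments. So $G-S$ has few components and toughness gives nothing. Toughness must be applied to a cutset adapted to the structure you derive. The claim that the residual configurations are impossible for $k\neq 3$, and are exactly the Petersen graph for $k=3$, is asserted rather than argued.

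\textbf{Why the paper's approach does not transfer.} In the paper's coline setting, independence of $N_L(x)^+\cup\{x\}$ becomes a star or triangle in the root graph, which makes a finite case analysis possible. General $(K_2\cup kK_1)$-free graphs have no such rigidity. What is missing is precisely a counting lemma that pits a valid crossing lemma against the forced adjacencies at the threshold $\frac{3(k-1)}{2}$, together with a genuine toughness argument.
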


\noindent
Though coline graphs are $(K_2\cup 3K_1)$-free,  applying \cref{t:OtSa24} for $k= 3$ yields \cref{t:main} only in the case that $\co(G)$ is 3-connected (and has minimum degree 3) and we will see that in fact the most difficult case of \cref{t:main} to prove is for graphs $G$ for which $\co(G)$ (is tough and) has minimum degree 2 (and so is 2-connected but not 3-connected). 
Therefore, we do not utilise \cref{t:OtSa24} in the proof of \cref{t:main}.

Apart from providing an alternate proof of \cref{t:WuMe} and Liu's~\cite{Li05} result, here we characterise tough coline graphs and traceable coline graphs. Using \cref{t:main}, 
we provide a description of non-tough coline graphs; see \cref{c:exceptions}. 
For a graph $L$, a vertex $v$ in $L$ is \emph{dominating} if it is adjacent to every other vertex in $L$.
For a graph $L$, let $L^*$ be the graph obtained from $L$ by adding a new dominating vertex.
It is a simple exercise to prove that $L$ is traceable if and only if $L^*$ is Hamiltonian. This motivates the following definition. A graph $L$ is \emph{pseudo-tough} if $L^*$ is tough. 
Notice also that if $L$ is in fact the coline graph of a graph $G$, then $L^*$ is the coline graph of the graph $G \cup K_2$. Note however $G \cup K_2$ is not necessarily a unique coline root graph of $L^*$; see \cref{t:Whit}.
Using this we also prove the following analogue of \cref{t:main} for traceability.
\begin{cor}\label{c:main}
Let $G$ be a graph whose coline graph $L=\co(G)$ is pseudo-tough. Then $L$ is traceable unless $G$ is $ K_3\circ K_1$. 
\end{cor}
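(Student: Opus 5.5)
Since $L=\co(G)$ is pseudo-tough, $L^*$ is tough by definition. Moreover $L^*=\co(G\cup K_2)$: the new edge in the $K_2$ component meets no edge of $G$, so it is a dominating vertex in the coline graph. We may therefore apply \cref{t:main} to the graph $G'=G\cup K_2$. It says that $L^*$ is Hamiltonian unless $G'$ is isomorphic to $K_5$, $H_1$, $H_2$ or $H_3$. If $L^*$ is Hamiltonian, then deleting the dominating vertex from a Hamiltonian cycle leaves a Hamiltonian path of $L$, so $L$ is traceable. It remains to treat the cases where $G\cup K_2$ is one of these four exceptional graphs.

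First, $K_5$ is connected and has no $K_2$ component. Also, $\co(K_5)$ is the Petersen graph, which has no dominating vertex, so $G\cup K_2\not\cong K_5$. Next, we check which of $H_1,H_2,H_3$ have a component isomorphic to $K_2$. From Figure~\ref{f:graphH}:
\begin{itemize}
\item $H_1$ is connected (a triangle with three pendant edges, two of whose leaves are joined by a further edge), so it has no $K_2$ component.
\item $H_2$ is connected (the brown edge is attached to the cyan pendant's leaf), so it has no $K_2$ component.
\item $H_3$ is $K_3\circ K_1$ together with a disjoint edge, so its only $K_2$ component is that edge.
\end{itemize}
As a sanity check against Figure~\ref{f:colineH}, only $\co(H_3)$ has a dominating vertex, namely the centre vertex adjacent to all six others.

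Hence the only possibility is $G\cup K_2\cong H_3$. Removing the $K_2$ component gives $G\cong K_3\circ K_1$, up to isolated vertices, which we have agreed to ignore. This establishes the dichotomy in the statement.

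One could add a remark that $\co(K_3\circ K_1)$ is indeed not traceable and is pseudo-tough. This would show the exception is genuine, though it is not required by the statement as phrased. For non-traceability, $\co(K_3\circ K_1)$ consists of:
\begin{itemize}
\item the three triangle edges, which are pairwise nonadjacent in $\co$;
\item the three pendant edges, which form a triangle in $\co$;
\item adjacencies in which each triangle edge is joined only to the pendant edge at the opposite vertex.
\end{itemize}
So the three triangle-edge vertices have degree $1$, and a graph with three vertices of degree $1$ has no Hamiltonian path.

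The only real care needed is reading off the components of $H_1$, $H_2$ and $H_3$ correctly from the figure. Beyond that, the argument is a direct reduction to \cref{t:main}.
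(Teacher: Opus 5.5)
Your proof is correct. It shares the paper's reduction: $L^*=\co(G\cup K_2)$ is tough, you apply \cref{t:main}, and a Hamiltonian cycle of $L^*$ minus the dominating vertex is a Hamiltonian path of $L$. The difference is in how you identify the exception.

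The paper works at the level of coline graphs. It concludes that $L^*$ is one of $\co(K_5),\co(H_1),\co(H_2),\co(H_3)$ and uses the dominating vertex to single out $\co(H_3)$. It then deduces $L\cong\co(K_3\circ K_1)$ and needs Whitney's theorem (\cref{t:Whit}), plus connectivity of $L(K_3\circ K_1)$, to get back from $L$ to the root $G$.

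You instead use the fact that \cref{t:main} is stated for the root itself. Applied to $G\cup K_2$, it says directly that $G\cup K_2$ is isomorphic to $K_5$, $H_1$, $H_2$ or $H_3$. Only $H_3$ has a $K_2$ component, and exactly one, so $G\cong K_3\circ K_1$. Your reading of the components of $H_1,H_2,H_3$ from the figure is correct.

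What each route buys: yours avoids Whitney's theorem and is shorter. The paper's would still work if \cref{t:main} were known only in the weaker form ``$L^*$ is Hamiltonian unless $L^*\cong\co(K_5)$ or $\co(H_i)$,'' which says nothing about which root was used. Your optional remark that $\co(K_3\circ K_1)$ has three vertices of degree one is a direct, self-contained check that the exception is genuinely non-traceable.
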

\noindent
We also give in \cref{c:Trace} a more explicit description of which coline graphs are not traceable.
A lovely coincidence is that $\co(K_3\circ K_1)=K_3\circ K_1$ and apart from $C_5$ there is no other graph $G$ such that $G=\co(G)$; see \cite{Ai69}. In fact $C_5$ is one of the few graphs whose coline graph is Hamiltonian but isn't pancyclic~\cite{Li05}.

The paper is organised as follows. \cref{s:pre} has preliminary definitions and results. In \cref{s:main} we give the proof of \cref{t:main}. In the final section, \cref{s:TNT}, we provide a characterisation of graphs whose coline graphs are not tough in \cref{c:exceptions}. The section and the paper conclude with the proof of \cref{c:main}, as well with \cref{c:Trace}, which provides a more explicit characterisation of graphs whose coline graphs are not traceable.

\section{Preliminaries}\label{s:pre}
For this paper, we let $G$ be a graph with maximum degree $\Delta$.
We will use the labels of edges from $G$ to denote the corresponding vertices in $\co(G)$; we will also make clear whether we are considering those elements as edges of $G$ or as vertices of $\co(G)$. 
We say that a graph $G$ is of \emph{type-$I$} if $G$ has an edge $e$ that is adjacent to every other edge and $\co(G)$ has exactly 2 components. 
For $k\geq 2$ we let $F_k$ denote the graph formed from $K_{1,k}$ by adding one edge incident to two of the vertices with degree 1. Note that $F_2$ is in fact $K_3$ and $F_3$ is $K_3^+$ as defined in the Introduction.
We use $K_4^-$ to denote the graph formed by removing an edge from $K_4$.
For a graph $G'$ such that $\co(G')$ is disconnected, let $\rho(G') = |E(G')|+c(\co(G'))$ and note that any supergraph $G$ of $G'$ with less than $\rho(G')$ edges cannot have a tough coline graph, since the edges of $S=E(G)-E(G')$ (considered as vertices in $\co(G)$) would be a cutset of $\co(G)$ such that $c(\co(G)-S)=c(\co(G'))>|S|$.
Chartrand et al~\cite{ChHeJaSc97} and Wu and Meng et al.~\cite{WuMe04} categorised which graphs $G$ have a connected coline graph.
Here we give a more explicit characterisation of graphs $G$ for which $\co(G)$ is disconnected along with the values of $c(\co(G))$ and $\rho(G)$,
with the following lemma.
\begin{lem}\label{l:disconColineGraph}
If $G$ is a graph and $L=\co(G)$ is disconnected then $G$ is either
\begin{enumerate}
\item[(i)] $K_{1,\lambda}$ and $c(L)=\lambda$ and $\rho(G)=2\lambda$, for some $\lambda \geq 1$; 
\item[(ii)] of type-$I$ and $c(L)=2$ and $\rho(G)=|E(G)|+2$; 
\item[(iii)] $C_4$ and $c(L)=2$ and $\rho(G)=6$;
\item[(iv)] $F_k$ and $c(L)=3$ and $\rho(G) = k+4$ for some, $k\geq 2$;  
\item[(v)] $K_4^-$ and $c(L) = 3$ and $\rho(G)=8$; 
\item[(vi)] $K_4$ and $c(L)=3$ and $\rho(G)=9$.
\end{enumerate}
\end{lem}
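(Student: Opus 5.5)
Assume $G$ has no isolated vertices and $L=\co(G)$ is disconnected. Split according to whether $G$ has an edge adjacent to every other edge.

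\textbf{Case 1: $G$ has such an edge $e=uv$.} Then $e$ is an isolated vertex of $L$. Every other edge contains $u$ or $v$, and there are three subcases.
\begin{itemize}
\item If $G=K_2$, we are in (i) with $\lambda=1$.
\item If $G$ is a star $K_{1,\lambda}$ with $\lambda\geq 2$, then $L$ has no edges, so $c(L)=\lambda$ and $\rho=2\lambda$. This is (i).
\item Otherwise, look at $L-e$. If it is connected, then $c(L)=2$ and $G$ is of type-$I$, which is (ii). If $L-e$ is disconnected, we analyse the edges other than $e$: they are the edges at $u$ other than $e$, the edges at $v$ other than $e$, and possibly edges $uw,vw$ forming triangles on $e$. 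Two edges $ux$ and $vy$ with $x\neq y$ are adjacent in $L$.
\begin{itemize}
\item Suppose $G$ has two non-$e$ edges at $u$ and at least one at $v$ (or symmetrically). One checks that $L-e$ is connected except in small configurations: it fails exactly when all the edges at $v$ other than $e$ go to a common vertex $w$ which is also a neighbour of $u$. So $G$ is $F_k$ with $e$ as the edge $uv$ of the triangle, $v$ having degree $2$. Here $L$ has components $\{e\}$, $\{uw\}$, and the rest, so $c(L)=3$.
\item If $G$ has one non-$e$ edge at each of $u$ and $v$ and they share an endpoint, then $G=K_3$, which is $F_2$. If they do not share an endpoint, $G=P_4$, whose coline graph $K_1\cup K_2$ has 2 components, so it is of type-$I$.
\end{itemize}
\end{itemize}

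\textbf{Case 2: no edge is adjacent to all others.} Take two edges $f,g$ in different components of $L$. They must be adjacent in $G$, say $f=ab$ and $g=ac$. Every other edge $h$ is adjacent in $L$ to $f$ unless it meets $f$, and to $g$ unless it meets $g$. So every other edge must meet $f$, or meet $g$, or else it links the two components.
\begin{itemize}
\item Since $f$ is not adjacent to everything, some edge $h$ misses $f$. Then $h$ meets $g$ but not $a$, so $h=cx$ with $x\neq a,b$. Hence $h\sim f$ in $L$.
\item Symmetrically, some edge $h'=by$ with $y\neq a,c$ exists, and $h'\sim g$ in $L$.
\item If $x\neq y$, then $h$ and $h'$ are disjoint, so $h\sim h'$ and $f,g$ are connected in $L$, a contradiction. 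Hence $x=y$, and we call this vertex $d$.
\end{itemize}
We now bound the remaining edges, which all lie on the vertex set $\{a,b,c,d\}$ plus possibly pendant edges. Any further edge must be non-adjacent in $L$ to enough of $f,g,h,h'$ that it does not join components, which forces it inside $\{a,b,c,d\}$. Thus $G\supseteq C_4=a b d c$ and $G\subseteq K_4$. A direct check gives:
\begin{itemize}
\item $C_4$ has $L=2K_2$, which is (iii);
\item $K_4^-$ has $L=3K_2$ minus one vertex, i.e. $2K_2\cup K_1$, which is (v);
\item $K_4$ has $L=3K_2$, which is (vi).
\end{itemize}

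\textbf{Computing $\rho$.} This is arithmetic: $\rho(F_k)=(k+1)+3=k+4$, $\rho(C_4)=6$, $\rho(K_4^-)=8$, $\rho(K_4)=9$.

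The main obstacle is the exhaustive subcase analysis in Case 1, when $L-e$ is disconnected. I would handle it by fixing $d_u\geq d_v$ and arguing that any two non-$e$ edges at $u$ with distinct far endpoints are joined through an edge at $v$ that avoids at least one of them. This reduces everything to the $F_k$ configuration.
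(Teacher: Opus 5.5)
Your Case 2 is correct, and it is essentially the paper's argument for two non-trivial components: find the $4$-cycle $abdc$, then force every further edge into $\{a,b,c,d\}$. Under the Case 2 hypothesis, though, the final check can only return $C_4$ or $K_4$. In $K_4^-$ the edge joining the two vertices of degree $3$ meets every other edge; it is exactly the isolated vertex of $\co(K_4^-)=2K_2\cup K_1$. So $K_4^-$ has to come out of Case 1, and there your key ``one checks'' claim is false.

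Take $G=K_4^-$ on $\{u,v,b,c\}$ with $bc$ missing, and let $e=uv$. Both $u$ and $v$ carry two further edges, and the edges at $v$ go to two different vertices. Yet $L-e=\co(C_4)=2K_2$ is disconnected. As written, your argument puts $K_4^-$ in the branch where $L-e$ is connected and declares it type-$I$ with $c(L)=2$, which is wrong. Your final list is right only because this error is offset by listing $K_4^-$ in Case 2, where it cannot occur. Your proposed repair fails at exactly this graph. To join $ub$ and $uc$ through one edge at $v$, that edge must be disjoint from \emph{both}, i.e.\ $vy$ with $y\notin\{b,c\}$. Avoiding just one of them does not connect them, and no such $vy$ exists here.

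The fix is to compute $L-e$ exactly. Let $A$ be the edges at $u$ other than $e$, with far endpoints $X$, and let $B$ be those at $v$, with far endpoints $Y$. Edges inside $A$ (or inside $B$) are pairwise non-adjacent in $L$, and $ux\sim vy$ in $L$ if and only if $x\neq y$. Hence $L-e$ is $K_{|A|,|B|}$ with the matching $\{\{ux,vx\}: x\in X\cap Y\}$ deleted. By symmetry assume $|A|\geq|B|$.
\begin{itemize}
\item If $|B|\geq 2$, this graph is connected unless $|A|=|B|=2$ and $X=Y$, which is precisely $K_4^-$.
\item If $|B|=1$, it is disconnected if and only if $Y\subseteq X$, which gives $F_k$ (with $K_3=F_2$ when $|A|=1$).
\item If $B=\emptyset$, it is disconnected if and only if $|A|\geq 2$, which gives a star.
\end{itemize}
With this, Case 1 yields exactly stars, type-$I$ graphs, $F_k$ and $K_4^-$, and Case 2 yields $C_4$ and $K_4$.
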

\begin{proof} 
We separate the proof into cases based on the number of non-trivial components of $L$.
First suppose that two components $H$ and $H'$ of $L$ are non-trivial. 
Let $x_1$ and $y_1$ be two adjacent vertices in $H$ and 
$x_2$ and $y_2$ be two adjacent vertices in $H'$.
Then as edges in $G$, $x_1$ and $y_1$ are non-adjacent as are $x_2$ and $y_2$.
On the other hand $x_1$ and $y_1$ are in a different component of $L$ to $x_2$ and $y_2$ and consequently each of the edges $x_1$ and $y_1$ is adjacent to both $x_2$ and $y_2$.
Since $x_1,y_1$ and $x_2,y_2$ are each disjoint pairs of 
non-adjacent edges and every edge from the first pair is adjacent to every edge in the second pair, these four edges form a $4$-cycle; 
let $V$ be the set of vertices of this $4$-cycle. 
If there were an edge $e$ in $G$ incident to at most one vertex in $V$, then $e$ would not be adjacent to at least one $x_1$ and $y_1$ and at least one of $x_2$ and $y_2$. 
It would then follow that the vertex $e$ is in the same connected component of $L$ as one of  $x_1$ and $y_1$ and one of  $x_2$ and $y_2$, contradicting the fact that  $x_1$ and $y_1$ are in a different component to $x_2$ and $y_2$.
Thus $V(G)=V$ and so $G$ is a supergraph of $C_4$ and a subgraph of $K_4$. It follows that $G$ is isomorphic to either $C_4$, $K_4^-$ or $K_4$ and $L$ has $2$, $3$ and $3$ components, respectively and therefore $\rho(C_4)=6$, $\rho(K_4^-)=8$ and $\rho(K_4)=9$.

Now suppose that only one component $H$ of $L$ is non-trivial and let $x$ and $y$ be adjacent vertices in $H$. 
Similarly to the previous paragraph, $x$ and $y$ are non-adjacent as edges in $G$.
Let $V$ be the set of vertices in $G$ incident to either $x$ or $y$.
As $L$ is disconnected, there is another component $H'$ of $L$ containing a vertex $z$ and, as every component other than $H$ is trivial, $z$ is an isolated vertex. 
Therefore, $z$ as an edge is adjacent to every other edge in $G$ and, in particular, must be incident to an endpoint $v$ of $x$ and an endpoint $w$ of $y$. 
Then every edge of $G$ other than $z$ must be incident to exactly one of $v$ and $w$. 

If $c(L)=2$, and hence $\rho(G)=|E(G)|+2$, then $G$ is of type-$I$, with $z$ being the edge in $G$ adjacent to every other edge. 
Otherwise there exists a vertex $z'$ in a different and necessarily trivial component of $L$ to $x,y$ and $z$. 
So, the edge  $z'$ is adjacent to every other edge in $G$. 
In particular, it must be adjacent to $x,y$ and $z$, which is possible only if $z'$ is incident $v$ or is incident to $w$. 
Without loss of generality suppose that $z'$ is incident to $v$. 
Then as $z$ and $z'$ are adjacent to every other edge in $G$, every edge other than $y$ must be incident to $v$. 
Since $y,z$ and $z'$ form a triangle, it follows that $G$ is isomorphic to $F_k$ for some $k \geq 3$.
Moreover, every edge of $G$ other than $z$ and $z'$ must, as vertices, be adjacent to $y$ and so must be in the component $H$ of $L$. 
So $c(\co(F_k))=3$ and $\rho(G)= |E(G)|+c(\co(G))= k+1+3=k+4$.

Finally if every component of $L$ is trivial, then every edge in $G$ is adjacent to every other edge. Thus $G$ is either $K_3$ or $K_{1,\lambda}$ for some $\lambda\geq 1$ and clearly $L$ has $3$ and $\lambda$ components, respectively and therefore $\rho(K_3)=6$ and $\rho(K_{1,\lambda})=2\lambda$. This completes the proof, because $K_3 = F_2$.
\end{proof}
\noindent 
We call a graph $G$ \emph{exceptional} if it is one of the graphs in \cref{l:disconColineGraph}.

The proof of \cref{t:main} will use the longest cycle method where, for the sake of contradiction we suppose our coline graph $L$ does not have a Hamiltonian cycle. So 
instead $L$ must have a longest non-Hamiltonian cycle $C$; $L$ indeed has a cycle, 
since we only consider tough coline graphs, which must be 2-connected, yet any graph with no cycle is a forest which is not $2$-connected.
We will need the following notation and properties related to longest cycles of graphs.
For a cycle $C$, with some fixed orientation, and vertices $x,y \in V(C)$, let $x \ora y$ be the path in $C$ from $x$ to $y$ which follows the orientation of $C$ and $x \ola y$ be the path from $x$ to $y$ in $C$ opposing the orientation of $C$.
For a vertex $x \in V(C)$, let $x^{+i}$ and $x^{-i}$ be the vertices that occur $i$ positions after $x$ and $i$ position before $x$ with respect to the orientation of $C$, respectively. We simply write $x^+$ and $x^-$ for $x^{+1}$ and $x^{-1}$, respectively. For a subset $S \subseteq V(C)$, we let $S^+ = \{x^+\;:\; x \in S\}$, $S^- = \{x^-\;:\; x \in S\}$ and $S^\pm = S^+\cup S^-$.
For two vertices $u$ and $v$, a $uv$-path is a path from $u$ to $v$.
The {\em neighbourhood} of a vertex $v$ in a graph $L$, denoted $N_L(v)$, is the set vertices in $L$ adjacent to $v$. We let $N_L(H)=\bigcup_{v \in V(H)} \left(N_L(v)- V(H)\right) $ for any subgraph $H$ of a graph $L$.
We will need the following properties of longest cycles in graphs from \cite{OtSa24}.
\begin{lem}\label{l:propsOfLongestCycle}
Let $L$ be a graph, $C$ be a longest cycle in $L$ and $H$ be a connected component of $L-V(C)$. Then the following hold.
\begin{itemize}
\item[(i)] If $x \in N_L(H)$ then $x^+,x^- \notin N_L(H)$.
\item[(ii)] If $x,y \in N_L(H)$, then there is no $x^+y^+$-path or  $x^-y^-$-path with all internal vertices in $L-V(C)$.
\item[(iii)] $N_L(H)^+\cup\{x\}$ and $N_L(H)^-\cup\{x\}$ are independent sets in $L$ for all $x \in V(H)$.
\end{itemize}
\end{lem}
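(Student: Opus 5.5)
These are the standard longest-cycle facts, and I would prove each item by taking an offending configuration and rerouting $C$ through $H$ to obtain a strictly longer cycle. This contradicts the maximality of $C$. Throughout, $H$ is a connected component of $L-V(C)$, so any two vertices of $H$ are joined by a path inside $H$, and $N_L(H)\subseteq V(C)$.

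For (i), suppose $x$ and $x^+$ both lie in $N_L(H)$. Choose $u\in V(H)$ adjacent to $x$ and $u'\in V(H)$ adjacent to $x^+$, and let $P$ be a $uu'$-path in $H$ (possibly $u=u'$). Replace the edge $xx^+$ of $C$ by the path $x\,u\,P\,u'\,x^+$. The result is a cycle of length at least $|C|+1$, a contradiction. The case of $x^-$ is symmetric.

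For (ii), let $x\neq y$ be in $N_L(H)$, and suppose $Q$ is an $x^+y^+$-path whose internal vertices avoid $C$. Let $R$ be an $xy$-path through $H$: go from $x$ to a neighbour in $H$, through $H$, then to $y$. Consider the cycle
\[
x\,R\,y\,\ola\,x^+\,Q\,y^+\,\ora\,x.
\]
It traverses $y\ola x^+$ backwards, then $Q$, then $y^+\ora x$ forwards. It uses every vertex of $C$ together with at least one vertex of $H$.

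The main obstacle is that $Q$ and $R$ must be internally disjoint. If $Q$ meets $H$, then $x^+$ or $y^+$ would be adjacent to $H$ through $Q$, or $Q$ would lie inside $H$. In either case $x^+\in N_L(H)$, which is excluded by (i). More precisely, any internal vertex of $Q$ in $H$ forces the neighbouring vertices of $Q$ to be in $H$ or in $N_L(H)$. Walking along $Q$ from $x^+$, the first vertex that is in $H$ or adjacent to $H$ would put $x^+$ in $N_L(H)$. So $Q$ avoids $H$, and the new cycle is longer than $C$, a contradiction. The $x^-y^-$ case is symmetric.

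For (iii), the independence of $N_L(H)^+$ is (ii) applied with a path that has no internal vertices, i.e. a single edge $x^+y^+$. Adding $x\in V(H)$ is handled by (i): a vertex of $N_L(H)^+$ is not in $N_L(H)$, so it is not adjacent to $x$. The set $N_L(H)^-\cup\{x\}$ follows in the same way.
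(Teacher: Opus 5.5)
Your proof is correct and follows the paper's argument. In (i) you replace $xx^+$ by a detour through $H$. In (ii) you build the cycle $x R y \ola x^+ Q y^+ \ora x$ and use (i) to keep $Q$ out of $H$, and in (iii) you apply (ii) to a single edge and use (i) for non-adjacency to $x$; the paper does exactly this in each part.
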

\begin{proof}
Note that as $H$ is a connected component of $L-V(C)$, $N_L(H) \subseteq V(C)$.
For (i) suppose otherwise that $x^\epsilon$  is in $N_L(H)$ for $\epsilon \in \{+,-\}$. Let $P^\epsilon$ be a path from $x$ to $x^\epsilon$ with all internal vertices in $H$.
Then we have the contradiction that 
\[
C'=
\begin{cases} 
xP^+x^+ \overrightarrow{C}x &\text{ if $x^+ \in N_L(H)$}\\
x P^- x^- \ola x &\text{ if $x^- \in N_L(H)$}\\
\end{cases}
\]
is a cycle longer than $C$.

For (ii), suppose for a contradiction that $P$ is either a $x^+y^+$-path or a $x^-y^-$-path, with all internal vertices in $L-V(C)$.
By (i), $x^+,x^-,y^+$ and $y^-$ are all not in $N_L(H)$ and so no vertex of $P$ can be in $H$.
Let $P_{xy}$ be a $xy$-path with all internal vertices in $H$ and note that $P$ and $P_{xy}$ are necessarily vertex disjoint.
Then we have the contradiction
\[
C' = 
\begin{cases}
x P_{xy}y \ola x^+ P y^+  \ora x &\text{ if $P$ is a $x^+y^+$-path} \\
  xP_{xy} y \ora x^- P y^-  \ola x&\text{ if $P$ is a $x^-y^-$-path} \\
\end{cases}
\]
is a cycle longer than $C$.

Finally by (ii) $N_L(H)^+$ and $N_L(H)^-$ are each independent sets and $x$ is not adjacent to any vertex in $N_L(H)^\pm$ by (i). Thus (iii) holds.
\end{proof}
\noindent
We will implicitly use \cref{l:propsOfLongestCycle}(i) throughout the remainder of the paper.
While (i) establishes that $N_L(H)\cap N_L(H)^+ = \emptyset$ and $N_L(H)\cap N_L(H)^- = \emptyset$ for any connected component $H$ in $L-V(C)$, it is possible that $N_L(H)^+ \cap N_L(H)^- \neq \emptyset$.

For the following two results, we let $L$ be a graph, $C$ be a longest cycle $C$ in $L$ with a fixed orientation, $x \in V(L)-V(C)$ and $x_1,\ldots, x_d$ be the neighbours of $x$ in order as they appear in $C$.
Also we write $x_i \prec_C x_j\preceq_C x_k$ if $x_j$ occurs between $x_i$ and $x_k$ with respect to the orientation of $C$, where $x_i \neq x_j$ but possibly $x_j=x_k$.
We also require several other properties of longest cycles.
\begin{lem}\label{l:common_arg}
For any $x_i\prec_C x_j \preceq_C x_k$ (with $x_i\neq x_k$), 
either
$\{x_i^+ x_j, x_i^- x_j, x_i^+ x_k, x_i^-x_k\}\cap E(L) = \emptyset$ or $x_j^- x_k^+ \notin E(L)$. 
In particular, either  $\{x_i^+ x_j$, $x_i^- x_j\}\cap E(L) = \emptyset$ or $x_j^- x_j^+ \notin E(L)$ for all distinct $i,j \in [d]$.
\end{lem}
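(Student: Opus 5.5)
The plan is to argue by contradiction using the maximality of $C$. Assume $x_j^-x_k^+\in E(L)$ and that at least one of $x_i^+x_j$, $x_i^-x_j$, $x_i^+x_k$, $x_i^-x_k$ is an edge of $L$. In each of the four cases I will write down explicitly a cycle through $x$ and all of $V(C)$. Such a cycle is longer than $C$, a contradiction. The "in particular" statement then follows by taking $k=j$: the hypothesis $x_i\prec_C x_j\preceq_C x_j$ holds for distinct $i,j$, and $x_j^-x_k^+$ becomes $x_j^-x_j^+$.

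Before building the cycles I need two facts so that they are genuine cycles. First, apply \cref{l:propsOfLongestCycle}(i) with $H$ the component of $L-V(C)$ containing $x$. It shows that no two neighbours of $x$ on $C$ are consecutive on $C$. Hence $x_i^+\ne x_j$, $x_k^+\ne x_i$, and so on. Second, the vertices $x_i$, $x_j$, $x_k$ appear in this cyclic order. So $C$ splits into the three segments $x_i^+\ora x_j^-$, $x_j\ora x_k$ and $x_k^+\ora x_i$, which partition $V(C)$. The middle segment is the single vertex $x_j$ when $j=k$.

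The four cycles are as follows.
\begin{itemize}
\item If $x_i^+x_j\in E(L)$: $x\,x_i \ola x_k^+\, x_j^- \ola x_i^+\, x_j \ora x_k\, x$.
\item If $x_i^-x_j\in E(L)$: $x\,x_i \ora x_j^-\, x_k^+ \ora x_i^-\, x_j \ora x_k\, x$.
\item If $x_i^+x_k\in E(L)$: $x\,x_i \ola x_k^+\, x_j^- \ola x_i^+\, x_k \ola x_j\, x$.
\item If $x_i^-x_k\in E(L)$: $x\,x_i \ora x_j^-\, x_k^+ \ora x_i^-\, x_k \ola x_j\, x$.
\end{itemize}
Each cycle uses the edges $xx_i$, $xx_j$ or $xx_k$, the assumed edge $x_j^-x_k^+$, the chosen one of the four edges, and otherwise only edges of $C$. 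Each cycle traverses the three segments exactly once, so it covers $V(C)\cup\{x\}$.

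The only real obstacle is checking these traversals carefully, including the degenerate situations. One is $x_j=x_k$, where a segment collapses to a single vertex. Another is a segment such as $x_k^+\ora x_i^-$ being a single vertex. These are handled by the non-adjacency facts above, which guarantee that every endpoint used is distinct where it needs to be.
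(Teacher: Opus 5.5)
Your proposal is correct and is essentially the paper's proof. Your four cycles are exactly the paper's four cycles $C'$ in a different order, and you get the ``in particular'' clause the same way, by setting $k=j$; your explicit use of \cref{l:propsOfLongestCycle}(i) to rule out degenerate segments is a detail the paper leaves implicit.
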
 
\begin{proof}
If  $x_j^- x_k^+$ and at least one of $x_i^+ x_j$, $x_i^- x_j$, $x_i^+ x_k$ or $x_i^- x_k$ were an edge in $L$, then 
\[
C' = 
\begin{cases}
x x_i \overrightarrow{C} x_j^- x_k^+ \overrightarrow{C}x_i^- x_j \overrightarrow{C} x_k x &\text{if }x_i^- x_j \in E(L)\\
x x_i \overrightarrow{C} x_j^- x_k^+ \overrightarrow{C}x_i^- x_k \ola x_j x &\text{if }x_i^- x_k \in E(L)\\
x x_i \overleftarrow{C} x_k^+ x_j^- \overleftarrow{C} x_i^+ x_j \overrightarrow{C} x_k x &\text{if }x_i^+ x_j \in E(L)\\
x x_i \overleftarrow{C} x_k^+ x_j^- \overleftarrow{C} x_i^+ x_k \overleftarrow{C} x_j x &\text{if }x_i^+ x_k \in E(L)
\end{cases}
\]
would be  a cycle longer than $C$. The second assertion of the lemma is obtained by setting $j=k$.
\end{proof}

\begin{lem}\label{l:nonXYedgesExtendCycle}
Suppose that $ww^+ \in x_i\overrightarrow{C}x_j$ with $i\neq j$. Then at least one of $wz$ and $w^+z'$ is not an edge in $L$ where $\{z,z'\}= \{x_i^-,x_j^+\}$.
\end{lem}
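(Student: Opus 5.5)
Assume for contradiction that both $wz$ and $w^+z'$ are edges of $L$, where $\{z,z'\}=\{x_i^-,x_j^+\}$, and then build a cycle in $L$ longer than $C$. This is the same style of argument as \cref{l:common_arg}. The extra ingredient is the vertex $x\notin V(C)$, which is adjacent to both $x_i$ and $x_j$. The two possible assignments of $z$ and $z'$ are handled separately; each case gets its own explicit cycle.

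\emph{Case 1: $z=x_i^-$ and $z'=x_j^+$.} Here $wx_i^-$ and $w^+x_j^+$ are edges. I would use
\[
C' = x\, x_i \ora w\, x_i^- \ola x_j^+\, w^+ \ora x_j\, x .
\]
We start at $x$ and go to $x_i$. We follow $C$ forward from $x_i$ to $w$, then jump to $x_i^-$. From $x_i^-$ we go backward around $C$, the long way round, until we reach $x_j^+$. We then jump to $w^+$, follow $C$ forward to $x_j$, and return to $x$.

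\emph{Case 2: $z=x_j^+$ and $z'=x_i^-$.} Here $wx_j^+$ and $w^+x_i^-$ are edges. I would use
\[
C' = x\, x_i \ora w\, x_j^+ \ora x_i^-\, w^+ \ora x_j\, x .
\]
This time we go from $x_i$ forward to $w$ and jump to $x_j^+$. We go forward from $x_j^+$ around to $x_i^-$, jump to $w^+$, follow $C$ forward to $x_j$, and close at $x$.

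\textbf{What must be checked.} The point that needs care is that each $C'$ really is a cycle. The segments $x_i\ora w$, $w^+\ora x_j$ and $x_j^+\ora x_i^-$ partition $V(C)$, because $ww^+$ lies on $x_i\ora x_j$. Hence each $C'$ uses every vertex of $C$ exactly once, plus $x$, and so has $|V(C)|+1$ vertices. Every consecutive pair is an edge: the pairs along $C$ are edges of $C$, the two jumps are edges by assumption, and $xx_i$ and $xx_j$ are edges because $x_i,x_j\in N_L(x)$.

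\textbf{Degenerate cases.} One should also confirm that nothing collapses when the segments are short. The cases to check are $w=x_i$, $w^+=x_j$, and $x_j^+=x_i^-$ (or $x_j^+=x_i$ when $C$ wraps tightly). In each of these the construction still gives a valid cycle: a segment may be a single vertex, but the endpoints remain distinct. One exception is $x_i^-=x_j$. This cannot happen, because $x_i^-\in N_L(x)$ would contradict \cref{l:propsOfLongestCycle}(i), applied with $H$ the component containing $x$. The same fact gives $x_j^+\neq x_i$.

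So $C'$ is longer than $C$, which contradicts the maximality of $C$. The main care point is the bookkeeping of directions in Case 1, where the backward traversal must be exactly the arc from $x_i^-$ back to $x_j^+$.
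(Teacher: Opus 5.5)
Your proposal is correct and matches the paper's proof: the paper uses the same two cycles, $x\, x_i \ora w\, x_i^- \ola x_j^+\, w^+ \ora x_j\, x$ and $x\, x_i \ora w\, x_j^+ \ora x_i^-\, w^+ \ora x_j\, x$, one for each assignment of $\{z,z'\}$. Your check of the degenerate cases, in particular that $x_i^-\neq x_j$ by \cref{l:propsOfLongestCycle}(i) so the three arcs really partition $V(C)$, is a detail the paper leaves implicit.
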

\begin{proof}
If this were not the case, then 
\[
C' = 
\begin{cases}
x x_i \overrightarrow{C} w x_i^- \overleftarrow{C} x_j^+ w^+ \overrightarrow{C} x_j x &\text{if } (z,z')=(x_i^-,x_j^+) \\
x x_i \overrightarrow{C} w x_j^+ \overrightarrow{C} x_i^- w^+ \overrightarrow{C} x_j x &\text{if } (z,z')=(x_j^+,x_i^-)
\end{cases}
\]
would be a cycle longer than $C$.
\end{proof}
For clarity, when applying any of the above lemmas, we specify (in order) the pair $(i,j)$ (or the triple $(i,j,k)$ in the case of \cref{l:common_arg}) which it is being applied to.

\section{Proof of \cref{t:main}}\label{s:main}
In this section we give the proof of \cref{t:main}. We first prove that $H_1,H_2$ and $H_3$ are genuine exceptions to the theorem. That is, we show that $H_1,H_2$ and $H_3$ are tough but not Hamiltonian. Next we prove a key lemma that shows that for any longest cycle $C$ in a non-Hamiltonian tough coline graph $L$, the connected components of $L-V(C)$ are trivial.
Finally we give the proof of \cref{t:main}, which proceed according to the structure of $N_L(x)^+ \cup \{x\}$ and $N_L(x)^- \cup \{x\}$, where $x \in L-V(C)$ is an isolated vertex in $L$. 

We begin by proving that the coline graphs of $H_1,H_2$ and $H_3$ are tough but not Hamiltonian.
\begin{lem}\label{l:Hs}
The graphs $H_1,H_2$ and $H_3$ all have a coline graph that is tough but is not Hamiltonian.
\end{lem}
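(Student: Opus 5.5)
The plan is to compute the three coline graphs explicitly (Figure~\ref{f:colineH}) and then verify the two properties by short structural arguments rather than by brute force. Label $H_1$ as drawn: a triangle $ABC$ with pendant edges $AD$, $CE$, $BF$. In $H_1$ the extra edge is $EF$. In $H_2$ it is $EG$ for a new vertex $G$. In $H_3$ it is an edge $GH$ disjoint from everything else. Each $H_i$ has $7$ edges, so each $L_i=\co(H_i)$ has $7$ vertices, and I would first write down their adjacency lists.

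\textbf{Toughness.} The key observation is that an independent set of $\co(G)$ is a set of pairwise adjacent edges of $G$, i.e.\ a star or a triangle. Since each $H_i$ has $\Delta(H_i)=3$, we get $\alpha(L_i)\le 3$. For any cutset $S$, $c(L_i-S)\le \alpha(L_i)\le 3$, so it suffices to check two things.
\begin{itemize}
\item[(a)] $L_i$ is $2$-connected, so every cutset has $|S|\ge 2$.
\item[(b)] No $2$-set $S$ leaves $3$ components.
\end{itemize}
For (b), three components of $L_i-S$ would contain an independent $3$-set $T$, which is a triangle or a $3$-star of $H_i$. The five vertices of $L_i-S$ must then split around $T$ so that the other two vertices attach to distinct members of $T$ only. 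I would enumerate the few triangles and $3$-stars of each $H_i$. For each one I would check that the remaining four edges of $H_i$ cannot be removed two at a time so that the rest falls into three pieces. This is a small finite check, aided by the fact that every edge outside $T$ is non-adjacent (in $H_i$) to at least two members of $T$. Item (a) is an equally small check: I would exhibit a spanning structure with two disjoint routes between any pair, or simply verify that deleting each single vertex leaves a connected graph.

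\textbf{Non-Hamiltonicity.} For $H_3$ the argument is clean. The edge $GH$ is disjoint from all other edges, so it is a dominating vertex of $L_3$, and $L_3-\{GH\}=\co(K_3\circ K_1)\cong K_3\circ K_1$. A Hamiltonian cycle of $L_3$ would give a Hamiltonian path of $K_3\circ K_1$. That is impossible, since $K_3\circ K_1$ has three vertices of degree $1$ and a path has at most two endpoints.

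For $H_1$ and $H_2$ I would use degree-$2$ vertices. In $L_1$ and $L_2$, several vertices have degree $2$, for example the triangle edges that meet many other edges. Both edges at a degree-$2$ vertex are forced into any Hamiltonian cycle. I would list these forced edges and show that they either give some vertex three forced edges or close a cycle shorter than $7$. For instance, in $\co(H_2)$ the three triangle edges $AB$, $BC$, $CA$ each have degree $2$. Their neighbourhoods, namely the pendant edges opposite them together with $EG$, overlap in a way that forces such a conflict.

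The main obstacle I expect is this last step for $H_1$ and $H_2$: getting the adjacency lists exactly right and finding the contradiction among the forced edges. If the degree-$2$ argument alone does not close, I would fall back on Lemma~\ref{l:propsOfLongestCycle}. Take a $6$-cycle $C$ and a vertex $x$ outside it. Then $N(x)^+\cup\{x\}$ must be independent, hence a star or triangle in $H_i$. This forces $|N(x)|\le 2$ and pins down $C$, and a short case check finishes the argument.
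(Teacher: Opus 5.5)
Your non-Hamiltonicity argument is essentially the paper's. In every $L_i$ the triangle edges $AB,BC,CA$ have degree $2$, and their neighbours are the opposite pendant and the seventh edge ($EF$, $EG$ or $GH$). The seventh edge would therefore carry three forced cycle edges. This pattern holds in $L_1$ exactly as you found it in $L_2$, so no fallback to Lemma~\ref{l:propsOfLongestCycle} is needed. Your dominating-vertex argument for $L_3$ is an equivalent repackaging of the same obstruction.

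The paper also observes that $L_1\subseteq L_2\subseteq L_3$ as spanning subgraphs, because the seventh edge meets fewer edges as you pass from $H_1$ to $H_3$. It therefore proves non-Hamiltonicity only for $L_3$ and toughness only for $L_1$, and you could do the same.

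For toughness your route differs from the paper's. The paper notes $L_1-S=\co(H_1-S)$, so a violating cutset makes $H_1-S$ one of the exceptional graphs of Lemma~\ref{l:disconColineGraph}. Edge counting then rules out every case: five edges remain, $\Delta(H_1)=3$, and $H_1$ contains neither $F_4$ nor $K_4^-$. Your bound $c(L_i-S)\le\alpha(L_i)\le 3$ reduces everything to $2$-connectivity plus a check of $2$-sets. That reduction is valid and avoids the classification lemma, at the cost of a hands-on case check.

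That case check cannot rest on the fact you cite, because the fact is false.
\begin{itemize}
\item For the triangle $T=\{AB,BC,CA\}$, the pendant $AD$ meets both $AB$ and $CA$, so it is disjoint from only one member of $T$; the same holds for $CE$ and $BF$.
\item For the star $\{AB,CA,AD\}$, the edge $BC$ is disjoint only from $AD$.
\end{itemize}

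The star cases still close. For each of the three stars, exactly three of the four outside edges are disjoint from at least two star edges; for the star at $A$ these are $CE$, $BF$ and the seventh edge. All three must then lie in $S$, forcing $|S|\ge 3$.

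The triangle case needs a separate argument. The seventh edge is adjacent in $L_i$ to all of $T$, so it must lie in $S$. Hence at least two of the pendants $AD,CE,BF$ survive. These pendants are pairwise disjoint in $H_i$ and hence adjacent in $L_i$. Each is also adjacent to its opposite triangle edge ($AD$--$BC$, $CE$--$AB$, $BF$--$CA$), so two members of $T$ lie in one component. With this replacement your proof is complete.
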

\begin{proof}
Let $L_1,L_2$ and $L_3$ be the  coline graphs of $H_1,H_2$ and $H_3$, respectively; see Figures~\ref{f:graphH} and \ref{f:colineH}.
First we show that $L_1,L_2$ and $L_3$ are not Hamiltonian. 
Since $L_1$ and $L_2$ are spanning subgraphs of $L_3$, it suffices to show that $L_3$ is not Hamiltonian. Let $e_1,e_2$ and $e_3$ be the edges of the triangle of $H_3$ and $e$ the isolated edge. Then as vertices in $L_3$, $e_1,e_2$ and $e_3$ have degree 2 and are each adjacent to $e$. However this means that if there were a Hamiltonian cycle in $L_3$, then it must contain the 6 edges incident to $e_1$, $e_2$ or $e_3$, yet three of these edges are incident to $e$. Hence $L_3$ and so $L_1$ and $L_2$ are not Hamiltonian.

Finally we show that $L_1,L_2$ and $L_3$ are tough.
As $L_1$ is a spanning subgraph of $L_2$ and $L_3$, it suffices to prove that $L_1$ is tough.
Let $e_1,e_2$ and $e_3$ be the edges of the triangle of $H_1$ and $e$ the edge adjacent to none of $e_1,e_2,e_3$.
Suppose for a contradiction that $L_1$ is not tough. Then $L_1$ has a cutset $S$ 
for which $c(L_1-S) >|S|$. Note that $L_1-S = \co(H_1-S)$, since the edges of $H_1$ correspond to the vertices of $L_1$,
where $H_1-S$ is the graph formed from $H_1$ by removing the edges in $S$. Then as $L_1-S$ is disconnected, $H_1-S$  must be an exceptional graph. As $L_1$ is 2-connected, $S$ must have at least 2 elements and $c(L_1-S)\geq 3$.
Therefore by \cref{l:disconColineGraph}, $H_1-S$ must either be  $K_{3}, K_4^-$, $K_4$ or $F_k$ for some $k \geq 3$ and $|S|=2$ or $H_1-S$ is $K_{1,\lambda}$ for some $\lambda > |S| \geq 2$.
The latter is impossible for $\lambda \geq 4$ because every vertex in $H_1$ has degree at most 3 and for $\lambda=3$, because then $|S|=2$ and $H_1-S$ has 5 edges, but $K_{1,3}$ only has 3 edges. Of the former possibilities, only $K_4^-$ and $F_4$ contain exactly $|E(H_1)|-|S|=5$ edges, yet $H_1$ contains neither $K_4^-$ or $F_4$ as a subgraph. 
\end{proof}

The next result is the key structural lemma used in the proof of \cref{t:main}.

\begin{lem}\label{l:eachCompTrivial}
Let $G$ be a graph whose coline graph $L$ is tough and $C$ be a longest cycle of $L$. Then every connected component of $L-V(C)$ is trivial.
\end{lem}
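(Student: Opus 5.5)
Suppose for a contradiction that some component $H$ of $L-V(C)$ contains two adjacent vertices $a$ and $b$. Since $a$ and $b$ are adjacent in $L$, they are non-adjacent edges of $G$, so together they span four distinct vertices. Let $N=N_L(H)\subseteq V(C)$; toughness gives $L$ $2$-connected, so $|N|\geq 2$. By \cref{l:propsOfLongestCycle}(iii), the set $N^+\cup\{a\}$ is independent in $L$, and likewise $N^+\cup\{b\}$. So every pair of edges of $G$ in $N^+\cup\{a\}$ shares an endpoint, and the same holds for $N^+\cup\{b\}$.

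The key structural step is this. A family of pairwise adjacent edges of $G$ is either a star or a triangle. So $N^+\cup\{a\}$ is a star through an endpoint of $a$ or is contained in a triangle containing $a$, and similarly for $b$. Each edge $f\in N^+$ must meet both $a$ and $b$, which are disjoint. Hence $f$ joins an endpoint of $a$ to an endpoint of $b$, so $N^+$ lies inside the $4$-cycle of edges between $V(a)$ and $V(b)$.

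I then combine the two constraints, which makes $|N^+|$ very small.
\begin{itemize}
\item If $N^+\cup\{a\}$ is a star at an endpoint $u$ of $a$ and $N^+\cup\{b\}$ is a star at $w$, then every edge of $N^+$ equals $uw$, so $|N^+|\leq 1$.
\item If one of them is a triangle, then $N^+$ has at most two edges, both meeting $a$ (resp. $b$) in a common configuration. A short check shows $|N^+|\le 2$.
\end{itemize}
The same bound holds for $N^-$. Since $|N^+|=|N|$, we get $|N|\leq 2$, so $N$ is a cutset with $|N|=2$ exactly. Toughness then forces $L-N$ to have at most $2$ components, one of which is $H$.

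To reach a contradiction I use the case $|N|=2$, say $N=\{y,z\}$. Then $N^+=\{y^+,z^+\}$ and $N^-=\{y^-,z^-\}$ are each pairs of non-adjacent vertices of $L$, that is, adjacent edges of $G$. By the analysis above, each pair forms two sides of a triangle containing $a$, or of one containing $b$, or is a pair of edges joining endpoints of $a$ and $b$. I would also take $S=\{y,z\}\cup$ suitable further vertices and count components, or use \cref{l:propsOfLongestCycle}(ii) (there is no $y^+z^+$-path through $L-V(C)$). The main obstacle is this case analysis. I would show that the four edges $y^\pm,z^\pm$, each adjacent to both $a$ and $b$ (when $y^+\ne z^-$ etc.), together with $a$ and $b$ themselves, force $G$ to live almost entirely on the four endpoints of $a$ and $b$. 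Indeed, any edge of $G$ avoiding, say, both endpoints of $a$ is adjacent in $L$ to $a$ and is forced either into $H$ or into $N$. Once the vertex set of $G$ is pinned down to four vertices, or four vertices plus pendant structure, $G$ is a subgraph of $K_4$ plus a few edges. These small cases are checked directly: either $L$ has a Hamiltonian or longer cycle, contradicting the choice of $C$, or a cutset $S$ with $c(L-S)>|S|$ exists, found via \cref{l:disconColineGraph} by letting $G-S$ be exceptional. Some of this must also handle $N^+\cap N^-\neq\emptyset$ (when $C$ has length $4$), which I would treat separately by the same $K_4$-bounded argument.
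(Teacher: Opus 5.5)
Your first half is correct and is exactly how the paper starts. Each edge of $N^+$ meets both of the disjoint edges $a,b$, so $N^+$ is a set of pairwise adjacent edges inside the $4$-cycle between $V(a)$ and $V(b)$. Hence $|N|=|N^+|\le 2$, so $|N|=2$, and toughness gives $c(L-N)\le 2$. The gap is that you never exhibit a third component of $L-N$, or any other contradiction. ``Force $G$ to live almost entirely on the four endpoints'' followed by ``small cases are checked directly'', possibly with longer cycles or \cref{l:propsOfLongestCycle}(ii), is a plan rather than a proof. It is not even a finite check as stated. By \cref{l:propsOfLongestCycle}(iii) the vertices of $H$ only need to meet both $y^+$ and $z^+$ as edges, so a priori $H$ can contain an arbitrarily large star at the common endpoint of $y^+$ and $z^+$. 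So $G$ is not confined to ``$K_4$ plus a few edges''.

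The missing step is short and uses the observation you already made: a vertex of $L$ adjacent to $a$ lies in $H$ or in $N$. Every vertex of $L-N$ outside $H$ is therefore non-adjacent in $L$ to both $a$ and $b$, so as an edge of $G$ it joins an endpoint of $a$ to an endpoint of $b$. With $a=pq$ and $b=rs$, these vertices lie in $\{pr,qs\}\cup\{ps,qr\}$, and the only adjacencies of $L$ among them are $pr\sim qs$ and $ps\sim qr$. The vertices $y^+,z^+$ lie outside $N\cup V(H)$ (by \cref{l:propsOfLongestCycle}(i)) and are distinct adjacent edges of $G$, so they lie in different classes. Hence $H$, the component of $y^+$ and the component of $z^+$ are three distinct components of $L-N$, and $c(L-N)\ge 3>|N|$.

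The paper expresses the same idea through \cref{l:disconColineGraph}. There $V(H)\cup N^+$ forms $F_k$ with $k=|V(H)|+1$. If $|V(H)|\ge 3$, then $y^+,z^+$ are the only vertices outside $N\cup V(H)$, so $|E(G)|=k+3<\rho(F_k)$. If $|V(H)|=2$, then $E(G)-N$ spans a graph $G'$ with $F_3\subseteq G'\subseteq K_4$ and $|E(G)|=|E(G')|+2<\rho(G')$. You need no longer-cycle constructions, no use of \cref{l:propsOfLongestCycle}(ii), and no separate treatment of $N^+\cap N^-\neq\emptyset$.
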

\begin{proof}
Let $H$ be a connected component of $L-V(C)$ and suppose for the sake of contradiction that 
$H$ has two adjacent vertices $x$ and $y$. 
Then as edges in $G$, $x$ and $y$ are not adjacent. 
Let $N_L(H)=\{z_1,\ldots ,z_d\}$ be the neighbours of the vertices of $H$, which are all necessarily in $C$, where $d\geq 2$ because $L$ is 2-connected. By \cref{l:propsOfLongestCycle}(iii), $N_L(H)^+\cup \{x\}$ is an independent set in $L$.
So the elements of $N_L(H)^+$ are, as edges, all adjacent to each other and to $x$ and $y$. So the edges corresponding to the members of $N_L(H)^+$ are mutually adjacent and must be adjacent $x$ and $y$.
Since $N_L(H)^+$ is an independent set in $L$ by \cref{l:propsOfLongestCycle}(iii), the members of $N_L(H)^+$ must be mutually adjacent.
Moreover, because $x$ and $y$ are non-adjacent edges and $N_L(H)^+\cup \{x\}$ and $N_L(H)^+\cup \{y\}$ are independent sets by \cref{l:propsOfLongestCycle}(iii), each member of $N_L(H)^+$ is incident an endpoint of $x$ and an endpoint of $y$. As only at most two edges can simultaneously be adjacent and be incident to both an endpoint of $x$ and an endpoint of $y$, it follows that $d\leq 2$ and consequently $d=2$. Furthermore, 
the elements of $V(H)\cup N_L(H)^+$, as edges in $G$, must form an $F_{k}$, where $k=|V(H)|+1 \geq 3$, with the elements in $V(H)$ forming a $K_{1,k-2}\cup K_2$. If $|V(H)|\geq 3$, then no edge other than $z_1^+$ and $z_2^+$ can be adjacent to every element in $V(H)$ and so $z_1^+$ and $z_2^+$ are the only vertices in $L$ outside $N_L(H)\cup V(H)$. Consequently, $G$ is a supergraph of $F_k$ with $|N_L(H)|+|V(H)|+2= k+3 < \rho(F_k)$ edges, yet $L$ is tough, contrary to \cref{l:disconColineGraph}(iv).
So $|V(H)|=2$. As the elements of $\left(V(L)-V(H)\right)-\{z_1,z_2\}$ are, as edges in $G$, adjacent to every element in $V(H)=\{x,y\}$, the edges in $E(G)-\{z_1,z_2\}$ must form a graph $G'$ that is a supergraph of $F_3$ and a subgraph of $K_4$. But then $G$ is a supergraph of $G'$ with $|E(G')|+2 < \rho(G')$ edges, yet $L$ is tough contrary to \cref{l:disconColineGraph}(iv), (v) or (vi).
\end{proof}

\noindent We can now give the proof of \cref{t:main}.
\begin{proof}[Proof of \cref{t:main}]
By \cref{l:Hs} and the fact that $\co(K_5)$ is the Petersen graph, we can assume that $G$ is neither $K_5$, $H_1,H_2$ or $H_3$.
Let $L = \co(G)$ and  $C$ be a longest cycle in $L$ with some fixed orientation. Suppose for a contradiction that $C$ is not Hamiltonian.
By \cref{l:eachCompTrivial}, each connected component of $L-V(C)$ is trivial. 
So let $x$ be an isolated vertex in $L-V(C)$, with degree $d \geq 2$ in $L$, since $L$ is tough and so is 2-connected. Then every neighbour of $x$ in $L$ must be in $C$. So let $N_L(x) = \{x_1,x_2,\ldots, x_d\}$ be the set of neighbours of $x$ in $C$, in order with respect to the orientation of $C$, where 
we take the subscripts of the members of $N_L(x)$ modulo $d$.
Let $v$ be the common endpoint of $x_1^+$ and $x$ and $v'$ be the other endpoint of $x$.

We often interpret the conclusions of Lemmas~\ref{l:propsOfLongestCycle}--\ref{l:nonXYedgesExtendCycle} in terms of edges in $G$, rather than in terms of the vertices in $L$.
We will also make constant use of the following facts throughout the proof. 
By \cref{l:propsOfLongestCycle}(iii), $N_L(x)^+\cup \{x\}$ and $N_L(x)^-\cup \{x\}$ are independent sets in $L$, so $x$ is adjacent to the edges in $N_L(x)^+$ and $N_L(x)^-$
Any vertex $y$ in $L-V(C)$ other than $x$ must also be isolated, by \cref{l:eachCompTrivial}. 
Furthermore $y$ cannot be adjacent to two vertices in $N_L(x)^+$ or two vertices in $ N_L(x)^-$, as otherwise $L$ contains either the path $x_i^+ y x_j^+$ or the path $x_i^- y x_j^-$ for some distinct $i, j \in [d]$, contrary to \cref{l:propsOfLongestCycle}(ii).
Also if $uu^+ \in E(C)$ with $u,u^+ \notin N_L(x)$, then the edges $u$ and $u^+$ must be adjacent to $x$ but not each other. Therefore the edges $u$ and $u^+$ must be incident to different endpoints of $x$.

First we prove the following claim that simplifies the remainder of the proof.
\begin{claim}\label{cl:xi+xi-}
$x_i^+x_i^- \notin E(L)$ for all $i \in [d]$.
\end{claim}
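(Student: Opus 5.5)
I will argue by contradiction. Suppose $x_i^+x_i^-\in E(L)$ for some $i\in[d]$. The plan is to first use the longest-cycle lemmas to pin down how $x_i$ meets the other cycle vertices. I will then translate this into a rigid configuration of edges in $G$, and finally contradict toughness through \cref{l:disconColineGraph} and the quantity $\rho$, in the same way as in the proof of \cref{l:eachCompTrivial}.

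\emph{Step 1 (longest-cycle constraints).} By the second assertion of \cref{l:common_arg}, applied to $(j,i)$ for every $j\neq i$, the assumption $x_i^-x_i^+\in E(L)$ forces $x_ix_j^+,\,x_ix_j^-\notin E(L)$ for all $j\neq i$. Intuitively, we could shortcut $C$ through $x_i^-x_i^+$ and reinsert the path $x_i x$ between $x_j^{\mp}$ and $x_j$, giving a longer cycle. I also record that $x_ix\in E(L)$, so $x_i$ and $x$ are disjoint edges of $G$. By \cref{l:propsOfLongestCycle}(iii), every member of $N_L(x)^+\cup N_L(x)^-$ is an edge of $G$ meeting $x$, and $N_L(x)^+$ and $N_L(x)^-$ are each sets of pairwise intersecting edges of $G$.

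\emph{Step 2 (structure in $G$).} By Step 1, each edge $x_j^\pm$ with $j\neq i$ meets both $x$ and $x_i$ in $G$. It therefore joins an endpoint of $x$ to an endpoint of $x_i$, so it lies in the $4$-cycle on $V(x)\cup V(x_i)$. A pairwise intersecting family of edges of that $4$-cycle has at most two members. Hence $|\{x_j^+ : j\neq i\}|\le 2$, which gives $d\le 3$, and all these edges lie in the $K_4$ on $V(x)\cup V(x_i)$. I then examine the remaining vertices of $C$. Consecutive cycle vertices $u,u^+\notin N_L(x)$ are edges meeting different endpoints of $x$. Each cycle vertex not in $N_L(x)\cup\{x_i\}$ should similarly be forced, via Lemmas~\ref{l:common_arg} and~\ref{l:nonXYedgesExtendCycle} applied to the pairs $(i,j)$, to meet $x_i$ as well. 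If so, it too lies inside this $K_4$.

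\emph{Step 3 (toughness contradiction).} Let $S=N_L(x)-\{x_i\}$, so $|S|\le 2$. The aim is to show that $E(G)-S$ spans a graph $G'$ with $C_4$ or $F_3\subseteq G'\subseteq K_4$, or some other exceptional graph, with $|E(G)|=|E(G')|+|S|<\rho(G')$. \cref{l:disconColineGraph} would then give $c(L-S)>|S|$, contradicting that $L$ is tough. The small cases $d=2,3$ may need to be checked directly, using that any further vertices of $L-V(C)$ are isolated and meet at most one vertex of each of $N_L(x)^\pm$.

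I expect the main obstacle to be Step 2's control of the cycle vertices that are not of the form $x_j^\pm$. Showing that every such vertex is confined to $V(x)\cup V(x_i)$ is where Lemma~\ref{l:nonXYedgesExtendCycle} has to be used carefully. I would try it first on segments $x_j\ora x_{j+1}$, using that alternate edges along such a segment must switch endpoints of $x$.
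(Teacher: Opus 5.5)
\textbf{There is a genuine gap.} Your Step 1 is correct, and so is the first half of Step 2, including $d\le 3$; the paper uses \cref{l:common_arg}$(j,1,1)$ exactly as you do. The failure is the endgame in Step 3: a contradiction with toughness cannot be reached. The claim is only true under the standing assumption, made at the start of the proof of \cref{t:main}, that $G$ is not $H_1$, $H_2$ or $H_3$. In these graphs the configuration $x_i^+x_i^-\in E(L)$ really occurs, while $L$ is tough by \cref{l:Hs}.

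Concretely, write $H_1$ as a triangle $ABT$ together with the edges $AP$, $TQ$, $BR$, $QR$.
\begin{itemize}
\item $C=(AT,BR,TQ,AP,BT,QR)$ is a longest cycle of the tough, non-Hamiltonian graph $\co(H_1)$.
\item The vertex $x=AB$ has $N_L(x)=\{TQ,QR\}$.
\item For $x_1=TQ$, the edges $x_1^-=BR$ and $x_1^+=AP$ are disjoint in $H_1$, so $x_1^+x_1^-\in E(L)$.
\end{itemize}
Your $S=\{QR\}$ leaves $E(G)-S\cong K_3\circ K_1$, whose coline graph is connected. So no $\rho$-count can give $c(L-S)>|S|$, and any argument ending in ``$L$ is not tough'' would prove a false statement.

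The same example refutes the confinement you hope for in Step 2. The vertices $x_i^+$ and $x_i^-$ are neighbours of $x_i$ on $C$, hence disjoint from $x_i$ in $G$. So they can never lie in the $K_4$ on $V(x)\cup V(x_i)$; here $AP$ and $BR$ both avoid $TQ$.

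\textbf{What is missing} is an argument forcing $G$ to be one of the excluded graphs. The paper gets this by looking at $x_1^{+2}$.
\begin{itemize}
\item Since $N_L(x)^-$ is independent, $x_1^+\neq x_2^-$. Hence $x_1^{+2}\notin N_L(x)$, and $x_1^{+2}$ meets the endpoint $v'$ of $x$ not on $x_1^+$.
\item \cref{l:nonXYedgesExtendCycle} with $(w,w^+)=(x_1^+,x_1^{+2})$ and $(z,z')=(x_1^-,x_j^+)$ then shows that $x_j^+$ meets $x_1^{+2}$.
\item An edge meeting all of $x$, $x_1$, $x_1^+$, $x_1^{+2}$ is uniquely determined: it joins $v$ to the common endpoint of $x_1$ and $x_1^{+2}$. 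This gives $d=2$, sharper than your $d\le 3$, and also $x_1^-x_2^+\in E(L)$.
\item The explicit longer cycles $xx_1x_1^+x_1^-x_2^+\ora x_1^{-2}x_1^{+2}\ora x_2x$ and $xx_1x_1^+x_1^-x_2^+x_2^-\ola x_1^{+2}x_2x$ force $x_1^{-2}=x_2^+$ and $x_1^{+2}=x_2^-$.
\item Hence $|C|=6$ and $G\in\{H_1,H_2,H_3\}$, contradicting the exclusion.
\end{itemize}
So the contradiction comes from longest-cycle rerouting together with the excluded list, not from \cref{l:disconColineGraph} and toughness.
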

\begin{proof}
Suppose for a contradiction and without loss of generality that $x_1^+x_1^- \in E(L)$. Then $x_1^+$ is incident to $v$ and $x_1^-$ is incident to $v'$. Since $N_L(x)^-$ is an independent set in $L$, $x_1^+ \neq x_{2}^-$ and therefore $x_1^{+2} \neq x_{2}$ and so $x_1^{+2}$ is incident to $v'$.
By \cref{l:nonXYedgesExtendCycle} with $(w,w')=(x_1^+,x_1^{+2})$ and 
$(z,z')=(x_1^-,x_j^+)$, $x_j^+$ is adjacent to $x_1^{+2}$ for all $j \neq 1$.
Also, by \cref{l:common_arg}$(j,1,1)$, $x_1$ is adjacent to both $x_j^+$ and $x_j^-$ for all $j \neq 1$. 
Therefore, given that $N_L(x)^+$ is an independent set in $L$ and $x_1$ and $x_1^+$ are not adjacent, $x_j^+$ is adjacent to $x$, $x_1$, $x_1^+$ and $x_1^{+2}$, which is possible only if $x_j^+$ is incident to $v$ and is adjacent to both $x_1$ and $x_1^{+2}$ on the same endpoint for all $j\neq 1$. Clearly only one such edge $x_j^+$ can exist. So $d=2$.
Furthermore
$x_{2}^+$ is not adjacent to $x_1^-$, which implies that $x_2^{+}\neq x_{1}^-$.

If $x_1^{-2} \neq x_{2}^+$, then $x_1^{-2}$ is incident to $v$ but is not $x_{2}^+$.
Then $x_1^{-2}$ is not adjacent to $x_1^{+2}$ and so 
\[
C' = xx_1 x_1^+ x_1^- x_{2}^+ \ora x_1^{-2} x_1^{+2} \ora x_2 x 
\]
is a cycle longer than $C$. So $x_1^{-2} = x_{2}^+$.
If $x_1^{+2} \neq x_2^-$, then $x_2^-$ must be adjacent to $x_1$ and $x_1^-$, which is possible only if $x_2^-$ is incident to $v'$. As $x_1^{+2} \neq x_2^-$, $x_2^-$ cannot be adjacent to $x_2^+$. Hence, given that $x_1^{+2}$ cannot be adjacent to $x_2$, 
\[
C' = x x_1 x_1^+ x_1^- x_2^+ x_2^- \ola x_1^{+2} x_2 x
\]
is a cycle longer than $C$. Consequently $ x_1^{+2} = x_2^-$.
Finally as $x_2$ cannot be adjacent to $x_2^-$ or $x_2^+$, $G$ must be either $H_1$, $H_2$ or $H_3$.
\end{proof}
By \cref{l:propsOfLongestCycle}(iii), $N_L(x)^+\cup \{x\}$ and $N_L(x)^-\cup \{x\}$ are each an independent set in $L$. Therefore as edges in $G$, each of $N_L(x)^+ \cup \{x\}$ and $N_L(x)^- \cup \{x\}$ must form either a star or a triangle, where the latter is possible only if $d=2$.
We separate the proof into cases based on which of these occurs, in each case yielding a contradiction.

\noindent \textbf{Case 1}: The vertices of $N_L(x)^+\cup\{x\}$ and $N_L(x)^-\cup \{ x\}$ correspond to stars on the same vertex. 

Exactly half the vertices  of $x_i \ora x_{i+1}^-$, as edges in $G$, are incident to $v$ for each $i \in [d]$, since the edges corresponding to the vertices in $x_i^+ \ora x_{i+1}^-$ alternate between being incident to $v$ and being incident to $v'$ and the first and final edges are incident to $v$. 
As every vertex in $C$ is in exactly one $x_i \ora x_{i+1}^-$ for some $i\in [d]$, it follows that exactly half of the edges corresponding to vertices in $C$ are incident to $v$.

We show that whenever there is an isolated vertex $y \neq x$ in $V(L)-V(C)$, the edge $y$ must be incident to $v$. Suppose otherwise that $y$ is incident to $v'$. 
The edge $y$ cannot be adjacent to $x_i^+$ for some $i$, as then $y$ is not adjacent to both $x_i^-$ and $x_i$, contradicting \cref{l:propsOfLongestCycle}. On the other hand
$y$ must be adjacent to all but at most 1 edge in $N_L(x)^+$ by \cref{l:propsOfLongestCycle}. Since $N_L(x)^+$ has at least two elements, we have a contradiction. Consequently, $y$ is incident to $v$.

Thus, there are 
\[
\frac{|V(C)|}{2}+|V(L)-V(C)|  = \frac{|V(L)|+(|V(L)|-|V(C)|)}{2}\geq \frac{|V(L)|+1}{2} = \frac{|E(G)|+1}{2}
\] 
edges in $G$ incident to $v$, since $|E(G)|=|V(L)|$ and $|V(L)|-|V(C)| \geq 1$.
However by \cref{l:disconColineGraph}(i), $L$ cannot be tough, since $G$ has a $K_{1,\Delta}$ subgraph and 
\[
|E(G)| < 2\left(\frac{|E(G)|+1}{2} \right)\leq 2\deg_G(v) \leq 2\Delta = \rho(K_{1,\Delta})
\]
edges, a contradiction.

$ $
\\
\noindent \textbf{Case 2}: $d\geq 3$ and the vertices of $N_L(x)^+ \cup \{x\}$ and $N_L(x)^- \cup \{x\}$ correspond to stars but on different endpoints of $x$.

Since $N_L(x)^+ \cup \{x\}$ and $N_L(x)^- \cup \{x\}$ correspond to stars on different endpoints of $x$, we must have that $N_L(x)^+ \cap N_L(x)^- =\emptyset$. 
By \cref{cl:xi+xi-}, the edge $x_j^+$ is adjacent to $x_j^-$ for all $j \in [d]$. Furthermore, because $x_j^+$ is adjacent to at most one edge in $N_L(x)^-$, we have that $x_{i+1}^- x_{j}^+ \in E(L)$ for all $i \in [d]$ and $j\neq i+1$.
Therefore \cref{l:common_arg}$(i,i+1,j)$ for $j \in [d]-\{i,i+1\}$ implies that the edges $x_i^+$ and $x_i^-$ are adjacent to all the elements in $N_L(x)-\{x_i\}$ 
for all $i\in [d]$.
Thus the edge $x_i$ is adjacent to every element in $N_L(x)^\pm-\{x_i^+,x_i^-\}$  for all $i\in [d]$. Since $x_j^+$ and $x_j^-$ are incident to a unique endpoint (that isn't an endpoint of $x$) for each $j$, $x_i$ can be adjacent to at most two pairs $x_j^+$ and $x_j^-$. Therefore, $d \leq 3$ and consequently $d=3$. Furthermore, the elements of $N_L(x)^\pm\cup \{x \}$, as edges, form a $K_5$.
As $G$ cannot be $K_5$, there is either another vertex in $V(L)-V(C)$ other than $x$ or a vertex in $C$ that isn't in  $N_L(x)^\pm $.
Any isolated vertex $y \neq x$ of $L-V(C)$ must, as an edge, be adjacent to $x$ and two elements from each of $N_L(x)^+$ and $N_L(x)^-$, by \cref{l:propsOfLongestCycle}(ii). However, this 
is impossible for an edge that is not $x$.
So there is a vertex in $C$ that is not in $N_L(x)^\pm$, say without loss of generality $x_1^{+2} \neq x_2^-$. Then the edge $x_1^{+2}$ is not adjacent to $x_2^+$ and so $x_1^{+2}x_2^+ \in E(L)$. Therefore as $x_1^+ x_3^{-} \in E(L)$ and $x_1^+ x_1^{+2} \in x_3\ora x_2$, \cref{l:nonXYedgesExtendCycle} applied to $(w,w')=(x_1^+,x_1^{+2})$ and $(z,z')=(x_2^+,x_3^-)$, yields a contradiction.
$ $\\

\noindent \textbf{Case 3}: $d=2$ and the edges
$N_L(x)^+ \cup \{x\}$ and $N_L(x)^- \cup \{x\}$ don't both form stars on the same vertex.

Since $N_L(x)^+ \cup \{x\}$ and $N_L(x)^- \cup \{x\}$ are independent sets in $L$ and $d=2$, the elements of $N_L(x)^+ \cup \{x\}$ and $N_L(x)^- \cup \{x\}$ as edges form either stars or triangles. By \cref{cl:xi+xi-} the edges $x_1^+$ and $x_1^-$ are adjacent as are the edges $x_2^+$ and $x_2^-$. Given both these constraints and noting that $N_L(x)^+$ and $N_L(x)^-$ both have cardinality 2 and their elements are incident to $v$ and $v'$ respectively, while it is possible that $x_1^+=x_2^-$ or $x_2^+=x_1^-$, it must be the case that the elements of $N_L(x)^\pm \cup \{x\}$ as edges in $G$
forms a graph $J$ that is isomorphic to $K_3$ when both $x_1^+=x_2^-$ and $x_2^+=x_1^-$, is isomorphic to $F_3$  when exactly one of $x_1^+=x_2^-$ and $x_2^+=x_1^-$ is true and is isomorphic to $K_4^-$ when $x_1^+\neq x_2^-$ and $x_2^+ \neq x_1^-$.
Without loss of generality, suppose that we don't have both $x_1^+ \neq x_2^-$ and $x_2^+=x_1^-$.

First suppose that $x_1^- $ is either $x_2^+$ or $x_2^{+2}$ and $x_2^- $ is either $x_1^+$ or $x_1^{+2}$. 
In particular, $V(C)-\{x_1,x_2\} = N_L(x)^\pm$ and so $(V(C)\cup\{x\})-\{x_1,x_2\}= E(J)$.
We show that the graph $J'$ formed from the edges $E(G)-\{x_1,x_2\}$  is isomorphic to either $K_{1,\lambda}$ with $\lambda\geq 3$, $F_k$ with $k \geq 2$ or $K_4^-$ as then $G$ has $J'$ as a subgraph and has  $|E(G)|= |E(J')|+2< \rho(J')$ edges, yet $L$ is tough contrary to \cref{l:disconColineGraph}(i), (iv) or (v).
The edges corresponding to the elements of $V(L)-V(C)$ must either form a triangle or a star and each of them must be adjacent to at least one element of $N_L(x)^+$ and one element of $N_L(x)^-$, by \cref{l:propsOfLongestCycle}(ii). 
So considering all possible graphs $J$ and all possible edges corresponding to vertices in $V(L)-V(C)$,  $J'$ isn't isomorphic to either $K_{1,\lambda}$ with $\lambda\geq 3$, $F_k$ with $k \geq 2$ or $K_4^-$ only if $V(L)-V(C) \neq \{x\}$ and 
$J$ is isomorphic to $K_4^-$. Furthermore, 
 $x_1^+$ and $x_1^-$ must be incident to $v$ and $x_2^+$ and $x_2^-$ must be incident to $v'$. 
Then any $y \neq x$ in $V(L)-V(C)$ is adjacent to both $x_i^-$ and $x_i^+$ in $L$ for some $i\in [2]$ and thus
$C' = x x_i x_{i}^+ y x_i^- x_{3-i}^+ x_{3-i} x$ is a cycle longer than $C$, since $x_{3-i}^-$ is the only vertex  from $C$ that isn't in $C'$, a contradiction.

Now suppose that, without loss of generality, $x_1^-$ is neither $ x_2^+$ or $x_2^{+2}$.
Let $z$ be the edge in $\{x_2^+,x_1^-\}$ incident to $v$, such an edge exists since $x_1^+ x_1^- \notin E(L)$ by \cref{cl:xi+xi-}. For any vertex $u \in x_2^{+2}\ora x_1^{-2}$, the edge $u$ cannot be adjacent to $x_1^+$ unless it is incident to $v$, because $u$ is neither $x_1^-$ or $x_2^+$. Consequently $u x_1^+ \in E(L)$ if $u$ is not incident to $v$. Thus, if $x_1^+ =x_2^-$, then 
\[
C' = 
\begin{cases}
 x x_1 x_1^+ x_2^{+2} \ora x_1^- x_2^+ x_2 x &\text{ if } z =x_2^+\\
 x x_1 x_1^- x_2^{+} \ora x_1^{-2} x_1^+ x_2 x &\text{ if } z =x_1^-
\end{cases}
\]
is a cycle longer than $C$, since the edges $x_2^{+2}$ and $x_1^{-2}$ are not incident to $v$ when $z=x_2^+$ and $z=x_1^{-}$, respectively because the edges in $x_2^{+2}\ora x_1^{-2}$ alternate endpoints of $x$. 
Therefore, $x_1^+ \neq x_2^-$.
Since $x_2^{+2}$ and $x_1^-$ are incident to the same endpoint of $x$, $x_2^{+3}$ cannot be $ x_1^-$. As the edges $x_2^{+2}$ and $x_{2}^{+3}$ are incident to different endpoints of $x$, with $x_2^+$ and $x_{2}^{+3}$ incident to the same endpoint of $x$ and $x_2^{+2},x_2^{+3} \notin N_L(x)^\pm$, either the edges corresponding to $N_L(x)^+\cup \{x\}$ and $N_L(x)^-\cup \{x\}$ form stars and $x_2^{+2} x_{1}^+  , x_2^{+3} x_2^- \in E(L)$ or the edges corresponding to $N_L(x)^+\cup \{x\}$ and $N_L(x)^-\cup \{x\}$ form triangles and $x_2^{+2} x_{2}^-  , x_2^{+3} x_{1}^+ \in E(L)$.
Either case contradicts \cref{l:nonXYedgesExtendCycle} applied to  $(w,w')=(x_2^{+2},x_2^{+3})$ and $\{z,z'\} = \{x_1^+,x_2^-\}$.
\end{proof}

\section{Toughness and Traceability of coline graphs}\label{s:TNT}
Now we consider traceability and toughness of coline graphs.
Combining Theorems~\ref{t:WuMe} and \ref{t:main} and \cref{l:Hs}, the coline graph $L$ of a graph $G$ is tough unless $G$ satisfies one of (i)-(iv) in \cref{t:WuMe} but isn't $H_1, H_2$ or $H_3$. Comparing this to the exceptions in Theorem 1 of~\cite{Li05}, gives the corollary below that provides a more explicit characterisation of non-tough coline graphs. Note that it is immediate from \cref{t:main} and the corollary below that the graphs satisfying (iii) and (iv) but not (i) or (ii) of \cref{t:WuMe} are exactly the 21 graphs in Figures~\ref{f:graphH} and~\ref{f:exceptions}. For completeness we give a proof of the corollary independent of Theorem~\ref{t:WuMe} and~\cite{Li05}.

\begin{cor}\label{c:exceptions}
Let $G$ be a graph with maximum degree $\Delta$. Then its coline graph $L$ is not tough if and only if $G$ either 
\begin{itemize}
\item[(i)]has less than $2\Delta$ edges;
\item[(ii)] has exactly $2\Delta$ edges and two vertices of degree $\Delta$ that are adjacent;
\item[(iii)]is one of the graphs in \cref{f:exceptions}.
\end{itemize}
\end{cor}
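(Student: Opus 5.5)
We prove both directions. The tool throughout is the observation recorded before \cref{l:disconColineGraph}: for $S\subseteq E(G)$ we have $L-S=\co(G-S)$, where $G-S$ deletes the edges of $S$. Hence a cutset $S$ witnessing non-toughness of $L$ exists if and only if some subgraph $G'=G-S$ is exceptional and $|E(G)|<\rho(G')$. So $L$ is not tough if and only if $G$ contains an exceptional subgraph $G'$ from \cref{l:disconColineGraph} with $|E(G)|\le \rho(G')-1$. The proof then becomes a bookkeeping exercise over the six families (i)--(vi) of that lemma.

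\emph{Sufficiency.} If $|E(G)|<2\Delta$, take $G'=K_{1,\Delta}$, the star at a vertex of maximum degree; then $\rho(G')=2\Delta$. If $|E(G)|=2\Delta$ and two adjacent vertices $u,w$ both have degree $\Delta$, take $G'$ to be the union of the stars at $u$ and $w$. This graph has $2\Delta-1$ edges and an edge $uw$ adjacent to all others. Its coline graph has exactly two components (one isolated vertex $uw$, and the complete bipartite piece between the stars, when $\Delta\ge 2$), so $G'$ is of type-$I$. Then $\rho(G')=2\Delta+1>|E(G)|$. For the graphs in \cref{f:exceptions}, we check each one directly, exhibiting the relevant $F_k$, $K_4^-$, $K_4$, $C_4$ or type-$I$ subgraph with too few extra edges.

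\emph{Necessity.} Suppose $G$ contains an exceptional $G'$ with $|E(G)|\le\rho(G')-1$, and $G$ fails (i) and (ii); we must land in \cref{f:exceptions}. We go through the cases.
\begin{itemize}
\item Case (i): $G'=K_{1,\lambda}$ forces $|E(G)|<2\lambda\le2\Delta$, which contradicts the failure of (i).
\item Case (ii): $G'$ is of type-$I$ with central edge $uw$. Then $|E(G')|\le \deg u+\deg w-1\le 2\Delta-1$, so $|E(G)|\le 2\Delta$. Failing (i) forces equality throughout, so $\deg u=\deg w=\Delta$ with $u,w$ adjacent, which is (ii).
\item Cases (iii)--(vi): $G'$ is $C_4$, $F_k$, $K_4^-$ or $K_4$, with $\rho$ equal to $6$, $k+4$, $8$, $9$ respectively. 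Hence $G$ has at most $\rho(G')-1$ edges, that is, at most one extra edge for $C_4$ and at most two for the others. For $F_k$ with $k\ge 4$, $\Delta\ge k$, so $2\Delta\ge 2k>k+3\ge|E(G)|$ and (i) holds. Thus only $k\le3$ matters, and $G$ has at most $8$ edges.
\end{itemize}
So it remains to enumerate all graphs $G$ with no isolated vertices obtained by adding at most one edge to $C_4$, at most two edges to $K_3$, $K_3^+$ or $K_4^-$, or at most two edges to $K_4$. For each, we discard those satisfying (i) or (ii); the survivors should be exactly \cref{f:exceptions}.

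The main obstacle is this final enumeration. It is a finite but fiddly case analysis: the added edges can be pendant, disjoint, or chords, and can create new degree-$\Delta$ adjacencies. Care is also needed so that the list matches \cref{f:exceptions} exactly and excludes $H_1,H_2,H_3$. Those three are tough by \cref{l:Hs}, so the same check applied to them must fail; this serves as a consistency test on the enumeration. I would organize the enumeration by $|E(G)|\in\{5,6,7,8\}$ and by $\Delta$, using $|E(G)|\ge2\Delta$ to bound $\Delta\le4$. This sharply limits the added edges, since a high-degree vertex quickly forces (i).
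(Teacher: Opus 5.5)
Your reduction is the paper's own. Via $L-S=\co(G-S)$, non-toughness means some exceptional $G'\subseteq G$ has $|E(G)|<\rho(G')$. Stars give (i), type-$I$ subgraphs give (i) or (ii), and $F_k$ with $k\ge 4$ falls under (i). Your bound $|E(G')|\le \deg u+\deg w-1$ actually makes the type-$I$ step more explicit than the paper does.

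\textbf{The gap.} You stop exactly where the forward direction has its content. You announce the enumeration and say the survivors ``should be'' the graphs of \cref{f:exceptions}, but never carry it out, so necessity is not proved. The paper handles this in a few lines. It reads the failure of (i) and (ii) as degree constraints that pin down $|E(G)|$ and forbid extra edges at the high-degree vertices of $G'$:
\begin{itemize}
\item[] \emph{$C_4$:} $G=C_4$ itself satisfies (ii), and an extra edge meeting $C_4$ gives $\Delta=3$ with $5$ edges, so $G=C_4\cup K_2$.
\item[] \emph{$K_3$:} $|E(G)|\le 5$ forces $\Delta=2$, so extra edges avoid the triangle. Since $K_3\cup K_2$ satisfies (ii), this leaves $K_3\cup 2K_2$ and $K_3\cup P_3$.
\item[] \emph{$F_3$:} $\Delta\ge 3$ forces $|E(G)|=6$ and $\Delta=3$. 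An extra edge at the centre gives $\Delta=4$, i.e.\ (i). One extra edge at a degree-$2$ triangle vertex, or two at the leaf, creates a degree-$3$ vertex adjacent to the centre, which is (ii). So at most one extra edge meets $F_3$, at its leaf, giving four graphs.
\item[] \emph{$K_4^-$:} $5$ or $6$ edges give (i) or (ii), so $|E(G)|=7$ and $\Delta=3$. The extra edges then avoid both degree-$3$ vertices, and each degree-$2$ vertex receives at most one. This gives the six $K_4^-$-based graphs and $K_4\cup K_2$.
\item[] \emph{$K_4$:} $G=K_4$ is (ii), and $7$ edges force a disjoint extra edge. With $8$ edges, at most one extra edge meets $K_4$: two at one vertex give $\Delta=5$, and two at different vertices create adjacent degree-$4$ vertices. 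This gives the last four graphs, for $18$ in total.
\end{itemize}
Without this explicit check, the list is only conjectured.

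\textbf{Two smaller points.} First, in your sufficiency argument for (ii), the union $G'$ of the stars at $u$ and $w$ is not always type-$I$. If $N(u)-\{w\}=N(w)-\{u\}$ and $\Delta\in\{2,3\}$, then $G'$ is $K_3$ or $K_4^-$, whose coline graph has three components. Otherwise the part of $\co(G')$ other than $uw$ is $K_{\Delta-1,\Delta-1}$ minus a matching, which is connected. Your conclusion survives because $\rho(K_3)=6>4$ and $\rho(K_4^-)=8>6$, but this case must be stated.

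Second, the case $\Delta=1$ that you set aside is a genuine exception to the statement as written. $\co(K_2)=K_1$ and $\co(2K_2)=K_2$ are complete, hence vacuously tough, yet $K_2$ satisfies (i) and $2K_2$ satisfies (ii). The paper shares this defect; it stems from \cref{l:disconColineGraph}(i) with $\lambda=1$. It is not a flaw of your approach, but a complete write-up should exclude it explicitly.
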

\begin{proof}
The reverse implication is immediate for (i) and (ii) from \cref{l:disconColineGraph}(i) and (ii), respectively, since any graph satisfying (ii) must be type-$I$. The fact that no graph in \cref{f:exceptions} is tough is immediate from each of the cutsets depicted  and \cref{l:disconColineGraph}(iii)-(vi).

For the forward implication we assume that $G$ has a non-tough coline graph $L$ which doesn't satisfy (i) or (ii).
Then $L$ has a cutset $S$ for which $c(L-S) >|S|$ and so  $G'=G-S$, the graph formed from $G$ by removing the edges in $S$, must have a disconnected coline graph, namely $L-S$.
So $G'$ must be an exceptional graph and as $L$ doesn't satisfy (i) or (ii), $G'$ satisfies one of (iii)-(vi) of \cref{l:disconColineGraph}.
So we consider each of these in turn. Let $G''$ be the subgraph of $G$ consisting of the edges in $E(G)-E(G')$. 

If $G'=C_4$, then $G$ must have at most $5 = \rho(C_4)-1$ edges by \cref{l:disconColineGraph}(iii) and at least 5 edges, since $G$ doesn't satisfy (ii). So $G$ must have exactly 5 edges and $G''$ must consist of a single edge disjoint from $G'$, as otherwise, $G$ would satisfy (i). So $G=C_4 \cup K_2$, which is one of the graphs in \cref{f:exceptions}.

If $G' = F_k$ for some $k \geq 2$, then by \cref{l:disconColineGraph}(iv), $G$ must have at most $k+3=\rho(F_k)-1$ edges and must have at least $2k $ edges, because $G$ doesn't satisfy (i). 
So $2k\leq k+3$ which implies that $k \leq 3$. 
If $k=2$ and therefore $G$ has at most 5 edges, then  since $G$ does not satisfy (i), $5 \geq 2\Delta$. Hence, $\Delta \leq 2$ and as $F_3$ contains vertices of degree $2$, $\Delta \geq 2$. Consequently, $\Delta=2$. Then $G$ must in fact have exactly 5 edges as it doesn't satisfy (ii) and has two adjacent vertices of degree 2.
Hence,  $G''$ must consist of two edges disjoint from $G'$ as otherwise $G$ would satisfy (i). Thus $G$ is either $K_3 \cup 2K_2$ or $K_3 \cup P_3$, which are graphs in \cref{f:exceptions}. 
If $k=3$, then $G$ has maximum degree at least 3 and so has 6 edges because $G$ does not satisfy (i). 
Therefore, at most 1 edge in $G''$ is incident to a vertex in $G'$ and if such an edge exists, then it must be incident to the unique one in $G'$ with degree 1 as otherwise $G$ would either have maximum degree at least 4 or have two adjacent vertices of degree 3, contrary to $G$ not satisfying (i) and (ii), respectively. 
Thus $G$ must be one of the 4 graphs in the second last row of \cref{f:exceptions}.

If $G' = K_4^-$, then $G$ must have at least 7 edges as otherwise it satisfies either (i) or (ii). By \cref{l:disconColineGraph}(v), $G$ has at most 7 edges and so $G$ has exactly 7 edges. No edge in $G''$ can be incident to one of the vertices of degree 3 in $G'$ as otherwise $G$ would satisfy (i). 
Similarly, only 1 edge in $G''$ can be adjacent to each of the 2 vertices in $G'$ of degree 2. Thus $G$ must be one of the 6 graphs in \cref{f:exceptions} whose exceptional graph is $K_4^-$ or $G$ must be $K_4\cup K_2$.

Finally if $G'=K_4$, then $G$ must have at least 7 edges as it doesn't satisfy (ii) and has at most 8 edges by \cref{l:disconColineGraph}(vi). If $G$ has 7 edges, then $G$ has maximum degree at least 4, and would therefore satisfy (i), unless $G = K_4\cup K_2$, which is one of the graphs in \cref{f:exceptions}. If $G$ has 8 edges, then at most one edge in $G''$ is incident to a vertex in $G'$ as otherwise $G$ would have maximum degree 5 or have two adjacent vertices of maximum degree 4, contrary to $G$ not satisfying (i) and (ii), respectively. Thus $G$ must be one of the first 4 graphs in \cref{f:exceptions}.

\end{proof}

\begin{figure}[htbp]
\centering 
\begin{subfigure}[b]{0.15\textwidth}
\begin{tikzpicture}[scale=\scale]
 \draw[line width = \lwid, color = blue]{

(\r,\r)-- (-\r,\r)
(-\r,\r)-- (-\r,-\r)
(-\r,-\r)-- (\r,-\r)
(\r,-\r)-- (\r,\r)
(\r,\r)-- (-\r,-\r)
(-\r,\r)--(\r,-\r)
};
 \draw[line width = \lwid, color = Green]{
(2*\r,-\r)--(2*\r,\r)
(3*\r,-\r)--(3*\r,\r)
};
\draw[line width =\cwid, color = black!50]{
(\r,\r)  node[circle, draw, fill=black!10,inner sep=\inse, minimum width=\inci] {}
(\r,-\r) node[circle, draw, fill=black!10,inner sep=\inse, minimum width=\inci] {}
(-\r,\r) node[circle, draw, fill=black!10,inner sep=\inse, minimum width=\inci] {}
(-\r,-\r)  node[circle, draw, fill=black!10,inner sep=\inse, minimum width=\inci] {}
(2*\r,-\r)  node[circle, draw, fill=black!10,inner sep=\inse, minimum width=\inci] {}
(2*\r,\r)  node[circle, draw, fill=black!10,inner sep=\inse, minimum width=\inci] {}
(3*\r,\r) node[circle, draw, fill=black!10,inner sep=\inse, minimum width=\inci] {}
(3*\r,-\r) node[circle, draw, fill=black!10,inner sep=\inse, minimum width=\inci] {}
};
\end{tikzpicture}
\end{subfigure}
\hspace*{5mm}
\begin{subfigure}[b]{0.15\textwidth}
\begin{tikzpicture}[scale=\scale]
 \draw[line width = \lwid, color = blue]{

(\r,\r)-- (-\r,\r)
(-\r,\r)-- (-\r,-\r)
(-\r,-\r)-- (\r,-\r)
(\r,-\r)-- (\r,\r)
(\r,\r)-- (-\r,-\r)
(-\r,\r)--(\r,-\r)
};
 \draw[line width = \lwid, color = Green]{
(2*\r,-\r)--(2.5*\r,\r)
(2.5*\r,\r)--(3*\r,-\r)
};
\draw[line width = \cwid, color = black!50]{
(\r,\r)  node[circle, draw, fill=black!10,inner sep=\inse, minimum width=\inci] {}
(\r,-\r) node[circle, draw, fill=black!10,inner sep=\inse, minimum width=\inci] {}
(-\r,\r) node[circle, draw, fill=black!10,inner sep=\inse, minimum width=\inci] {}
(-\r,-\r)  node[circle, draw, fill=black!10,inner sep=\inse, minimum width=\inci] {}
(2*\r,-\r)  node[circle, draw, fill=black!10,inner sep=\inse, minimum width=\inci] {}
(3*\r,-\r) node[circle, draw, fill=black!10,inner sep=\inse, minimum width=\inci] {}
(2.5*\r,\r)node[circle, draw, fill=black!10,inner sep=\inse, minimum width=\inci] {}
};
\end{tikzpicture}
\end{subfigure} 
\hspace*{5mm}
\begin{subfigure}[b]{0.15\textwidth}
\begin{tikzpicture}[scale =\scale]
 \draw[line width = \lwid, color = blue]{

(\r,\r)-- (-\r,\r)
(-\r,\r)-- (-\r,-\r)
(-\r,-\r)-- (\r,-\r)
(\r,-\r)-- (\r,\r)
(\r,\r)-- (-\r,-\r)
(-\r,\r)--(\r,-\r)
};
 \draw[line width = \lwid, color = Green]{
(\r,-\r)--(2.73*\r,0)
(3.73*\r,-\r)--(3.73*\r,\r)
};
\draw[line width = \cwid, color = black!50]{
(\r,\r)  node[circle, draw, fill=black!10,inner sep=\inse, minimum width=\inci] {}
(\r,-\r) node[circle, draw, fill=black!10,inner sep=\inse, minimum width=\inci] {}
(-\r,\r) node[circle, draw, fill=black!10,inner sep=\inse, minimum width=\inci] {}
(-\r,-\r)  node[circle, draw, fill=black!10,inner sep=\inse, minimum width=\inci] {}
(3.73*\r,-\r)  node[circle, draw, fill=black!10,inner sep=\inse, minimum width=\inci] {}
(3.73*\r,\r)  node[circle, draw, fill=black!10,inner sep=\inse, minimum width=\inci] {}
(2.73*\r,0) node[circle, draw, fill=black!10,inner sep=\inse, minimum width=\inci] {}
};
\end{tikzpicture}
\end{subfigure}
\hspace*{5mm}
\begin{subfigure}[b]{0.15\textwidth}
\begin{tikzpicture}[scale=\scale]
 \draw[line width = \lwid, color = blue]{

(\r,\r)-- (-\r,\r)
(-\r,\r)-- (-\r,-\r)
(-\r,-\r)-- (\r,-\r)
(\r,-\r)-- (\r,\r)
(\r,\r)-- (-\r,-\r)
(-\r,\r)--(\r,-\r)
};
 \draw[line width = \lwid, color = Green]{
(\r,-\r)--(3*\r,-\r)
(3*\r,-\r)--(3*\r,\r)
};
\draw[line width = \cwid, color = black!50]{
(\r,\r)  node[circle, draw, fill=black!10,inner sep=\inse, minimum width=\inci] {}
(\r,-\r) node[circle, draw, fill=black!10,inner sep=\inse, minimum width=\inci] {}
(-\r,\r) node[circle, draw, fill=black!10,inner sep=\inse, minimum width=\inci] {}
(-\r,-\r)  node[circle, draw, fill=black!10,inner sep=\inse, minimum width=\inci] {}
(3*\r,-\r)  node[circle, draw, fill=black!10,inner sep=\inse, minimum width=\inci] {}
(3*\r,\r)  node[circle, draw, fill=black!10,inner sep=\inse, minimum width=\inci] {}
};
\end{tikzpicture}
\end{subfigure}
\hspace*{5mm}
\begin{subfigure}[b]{0.15\textwidth}
\begin{tikzpicture}[scale=\scale]
 \draw[line width = \lwid, color = blue]{

(\r,\r)-- (-\r,\r)
(-\r,\r)-- (-\r,-\r)
(-\r,-\r)-- (\r,-\r)
(\r,-\r)-- (\r,\r)
(\r,\r)-- (-\r,-\r)
(-\r,\r)--(\r,-\r)
};
 \draw[line width = \lwid, color = Green]{
(2*\r,-\r)--(2*\r,\r)
};
\draw[line width = \cwid, color = black!50]{
(\r,\r)  node[circle, draw, fill=black!10,inner sep=\inse, minimum width=\inci] {}
(\r,-\r) node[circle, draw, fill=black!10,inner sep=\inse, minimum width=\inci] {}
(-\r,\r) node[circle, draw, fill=black!10,inner sep=\inse, minimum width=\inci] {}
(-\r,-\r) node[circle, draw, fill=black!10,inner sep=\inse, minimum width=\inci] {}
(2*\r,-\r) node[circle, draw, fill=black!10,inner sep=\inse, minimum width=\inci] {}
(2*\r,\r) node[circle, draw, fill=black!10,inner sep=\inse, minimum width=\inci] {}
};
\end{tikzpicture}
\end{subfigure}
\\
\vspace*{3mm}
\begin{subfigure}[b]{0.15\textwidth}
\begin{tikzpicture}[scale=\scale]
 \draw[line width = \lwid, color = blue]{

(\r,\r)-- (-\r,\r)
(-\r,\r)-- (-\r,-\r)
(-\r,-\r)-- (\r,-\r)
(\r,-\r)-- (\r,\r)
(\r,\r)-- (-\r,-\r)
};
 \draw[line width = \lwid, color = Green]{
(2*\r,-\r)--(2*\r,\r)
(3*\r,-\r)--(3*\r,\r)
};
\draw[line width = \cwid, color = black!50]{
(\r,\r)  node[circle, draw, fill=black!10,inner sep=\inse, minimum width=\inci] {}
(\r,-\r) node[circle, draw, fill=black!10,inner sep=\inse, minimum width=\inci] {}
(-\r,\r) node[circle, draw, fill=black!10,inner sep=\inse, minimum width=\inci] {}
(-\r,-\r)  node[circle, draw, fill=black!10,inner sep=\inse, minimum width=\inci] {}
(2*\r,-\r)  node[circle, draw, fill=black!10,inner sep=\inse, minimum width=\inci] {}
(2*\r,\r)  node[circle, draw, fill=black!10,inner sep=\inse, minimum width=\inci] {}
(3*\r,\r) node[circle, draw, fill=black!10,inner sep=\inse, minimum width=\inci] {}
(3*\r,-\r) node[circle, draw, fill=black!10,inner sep=\inse, minimum width=\inci] {}
};
\end{tikzpicture}
\end{subfigure}
\hspace*{5mm}
\begin{subfigure}[b]{0.15\textwidth}
\begin{tikzpicture}[scale=\scale]
 \draw[line width = \lwid, color = blue]{

(\r,\r)-- (-\r,\r)
(-\r,\r)-- (-\r,-\r)
(-\r,-\r)-- (\r,-\r)
(\r,-\r)-- (\r,\r)
(\r,\r)-- (-\r,-\r)
};
 \draw[line width = \lwid, color = Green]{
(2*\r,-\r)--(2.5*\r,\r)
(2.5*\r,\r)--(3*\r,-\r)
};
\draw[line width = \cwid, color = black!50]{
(\r,\r)  node[circle, draw, fill=black!10,inner sep=\inse, minimum width=\inci] {}
(\r,-\r) node[circle, draw, fill=black!10,inner sep=\inse, minimum width=\inci] {}
(-\r,\r) node[circle, draw, fill=black!10,inner sep=\inse, minimum width=\inci] {}
(-\r,-\r)  node[circle, draw, fill=black!10,inner sep=\inse, minimum width=\inci] {}
(2*\r,-\r)  node[circle, draw, fill=black!10,inner sep=\inse, minimum width=\inci] {}
(3*\r,-\r) node[circle, draw, fill=black!10,inner sep=\inse, minimum width=\inci] {}
(2.5*\r,\r)node[circle, draw, fill=black!10,inner sep=\inse, minimum width=\inci] {}
};
\end{tikzpicture}
\end{subfigure}
\hspace*{5mm}
\begin{subfigure}[b]{0.15\textwidth}
\begin{tikzpicture}[scale =\scale]
 \draw[line width = \lwid, color = blue]{

(\r,\r)-- (-\r,\r)
(-\r,\r)-- (-\r,-\r)
(-\r,-\r)-- (\r,-\r)
(\r,-\r)-- (\r,\r)
(\r,\r)-- (-\r,-\r)
};
 \draw[line width = \lwid, color = Green]{
(\r,-\r)--(2.73*\r,0)
(3.73*\r,-\r)--(3.73*\r,\r)
};
\draw[line width = \cwid, color = black!50]{
(\r,\r)  node[circle, draw, fill=black!10,inner sep=\inse, minimum width=\inci] {}
(\r,-\r) node[circle, draw, fill=black!10,inner sep=\inse, minimum width=\inci] {}
(-\r,\r) node[circle, draw, fill=black!10,inner sep=\inse, minimum width=\inci] {}
(-\r,-\r)  node[circle, draw, fill=black!10,inner sep=\inse, minimum width=\inci] {}
(3.73*\r,-\r)  node[circle, draw, fill=black!10,inner sep=\inse, minimum width=\inci] {}
(3.73*\r,\r)  node[circle, draw, fill=black!10,inner sep=\inse, minimum width=\inci] {}
(2.73*\r,0) node[circle, draw, fill=black!10,inner sep=\inse, minimum width=\inci] {}
};
\end{tikzpicture}
\end{subfigure}
\hspace*{5mm}
\begin{subfigure}[b]{0.15\textwidth}
\begin{tikzpicture}[scale=\scale]
 \draw[line width = \lwid, color = blue]{

(\r,\r)-- (-\r,\r)
(-\r,\r)-- (-\r,-\r)
(-\r,-\r)-- (\r,-\r)
(\r,-\r)-- (\r,\r)
(\r,\r)-- (-\r,-\r)
};
 \draw[line width = \lwid, color = Green]{
(\r,-\r)--(3*\r,-\r)
(3*\r,-\r)--(3*\r,\r)
};
\draw[line width = \cwid, color = black!50]{
(\r,\r)  node[circle, draw, fill=black!10,inner sep=\inse, minimum width=\inci] {}
(\r,-\r) node[circle, draw, fill=black!10,inner sep=\inse, minimum width=\inci] {}
(-\r,\r) node[circle, draw, fill=black!10,inner sep=\inse, minimum width=\inci] {}
(-\r,-\r)  node[circle, draw, fill=black!10,inner sep=\inse, minimum width=\inci] {}
(3*\r,-\r)  node[circle, draw, fill=black!10,inner sep=\inse, minimum width=\inci] {}
(3*\r,\r)  node[circle, draw, fill=black!10,inner sep=\inse, minimum width=\inci] {}
};
\end{tikzpicture}   
\end{subfigure}
\hspace*{5mm}
\begin{subfigure}[b]{0.15\textwidth}
\begin{tikzpicture}[scale=\scale]
 \draw[line width = \lwid, color = blue]{

(\r,\r)-- (-\r,\r)
(-\r,\r)-- (-\r,-\r)
(-\r,-\r)-- (\r,-\r)
(\r,-\r)-- (\r,\r)
};
 \draw[line width = \lwid, color = Green]{
(2*\r,-\r)--(2*\r,\r)
};
\draw[line width = \cwid, color = black!50]{
(\r,\r)  node[circle, draw, fill=black!10,inner sep=\inse, minimum width=\inci] {}
(\r,-\r) node[circle, draw, fill=black!10,inner sep=\inse, minimum width=\inci] {}
(-\r,\r) node[circle, draw, fill=black!10,inner sep=\inse, minimum width=\inci] {}
(-\r,-\r)  node[circle, draw, fill=black!10,inner sep=\inse, minimum width=\inci] {}
(2*\r,-\r)  node[circle, draw, fill=black!10,inner sep=\inse, minimum width=\inci] {}
(2*\r,\r)  node[circle, draw, fill=black!10,inner sep=\inse, minimum width=\inci] {}

};
\end{tikzpicture}
\end{subfigure}
\\
\vspace*{3mm}
\hspace*{-2.5mm}
\begin{subfigure}[b]{0.15\textwidth}
\begin{tikzpicture}[scale=\scale]
 \draw[line width = \lwid, color = blue]{
(\r,\r)-- (-\r,\r)
(-\r,\r)-- (-\r,-\r)
(-\r,-\r)-- (\r,-\r)
(\r,\r)-- (-\r,-\r)
};
 \draw[line width = \lwid, color = Green]{
(2*\r,-\r)--(2*\r,\r)
(3*\r,-\r)--(3*\r,\r)
};
\draw[line width = \cwid, color = black!50]{
(\r,\r)  node[circle, draw, fill=black!10,inner sep=\inse, minimum width=\inci] {}
(\r,-\r) node[circle, draw, fill=black!10,inner sep=\inse, minimum width=\inci] {}
(-\r,\r) node[circle, draw, fill=black!10,inner sep=\inse, minimum width=\inci] {}
(-\r,-\r)  node[circle, draw, fill=black!10,inner sep=\inse, minimum width=\inci] {}
(2*\r,-\r)  node[circle, draw, fill=black!10,inner sep=\inse, minimum width=\inci] {}
(2*\r,\r)  node[circle, draw, fill=black!10,inner sep=\inse, minimum width=\inci] {}
(3*\r,\r) node[circle, draw, fill=black!10,inner sep=\inse, minimum width=\inci] {}
(3*\r,-\r) node[circle, draw, fill=black!10,inner sep=\inse, minimum width=\inci] {}
};
\end{tikzpicture}
\end{subfigure}
\hspace*{5mm}
\begin{subfigure}[b]{0.15\textwidth}
\begin{tikzpicture}[scale=\scale]
 \draw[line width = \lwid, color = blue]{

(\r,\r)-- (-\r,\r)
(-\r,\r)-- (-\r,-\r)
(-\r,-\r)-- (\r,-\r)
(\r,\r)-- (-\r,-\r)
};
 \draw[line width = \lwid, color = Green]{
(2*\r,-\r)--(2.5*\r,\r)
(2.5*\r,\r)--(3*\r,-\r)
};
\draw[line width = \cwid, color = black!50]{
(\r,\r)  node[circle, draw, fill=black!10,inner sep=\inse, minimum width=\inci] {}
(\r,-\r) node[circle, draw, fill=black!10,inner sep=\inse, minimum width=\inci] {}
(-\r,\r) node[circle, draw, fill=black!10,inner sep=\inse, minimum width=\inci] {}
(-\r,-\r)  node[circle, draw, fill=black!10,inner sep=\inse, minimum width=\inci] {}
(2*\r,-\r)  node[circle, draw, fill=black!10,inner sep=\inse, minimum width=\inci] {}
(3*\r,-\r) node[circle, draw, fill=black!10,inner sep=\inse, minimum width=\inci] {}
(2.5*\r,\r)node[circle, draw, fill=black!10,inner sep=\inse, minimum width=\inci] {}
};
\end{tikzpicture}
\end{subfigure}
\hspace*{5mm}
\begin{subfigure}[b]{0.15\textwidth}
\begin{tikzpicture}[scale =\scale]
 \draw[line width = \lwid, color = blue]{

(\r,\r)-- (-\r,\r)
(-\r,\r)-- (-\r,-\r)
(-\r,-\r)-- (\r,-\r)
(\r,\r)-- (-\r,-\r)
};
 \draw[line width = \lwid, color = Green]{
(\r,-\r)--(2.73*\r,0)
(3.73*\r,-\r)--(3.73*\r,\r)
};
\draw[line width = \cwid, color = black!50]{
(\r,\r)  node[circle, draw, fill=black!10,inner sep=\inse, minimum width=\inci] {}
(\r,-\r) node[circle, draw, fill=black!10,inner sep=\inse, minimum width=\inci] {}
(-\r,\r) node[circle, draw, fill=black!10,inner sep=\inse, minimum width=\inci] {}
(-\r,-\r)  node[circle, draw, fill=black!10,inner sep=\inse, minimum width=\inci] {}
(3.73*\r,-\r)  node[circle, draw, fill=black!10,inner sep=\inse, minimum width=\inci] {}
(3.73*\r,\r)  node[circle, draw, fill=black!10,inner sep=\inse, minimum width=\inci] {}
(2.73*\r,0) node[circle, draw, fill=black!10,inner sep=\inse, minimum width=\inci] {}
};
\end{tikzpicture}
\end{subfigure}
\hspace*{5mm}
\begin{subfigure}[b]{0.15\textwidth}
\begin{tikzpicture}[scale=\scale]
\draw[line width = \cwid, color = black!0]{
(-1.25*\r,-\r)  node[circle, draw, fill=black!0,inner sep=\inse, minimum width=\inci] {}
};
 \draw[line width = \lwid, color = blue]{

(\r,\r)-- (-\r,\r)
(-\r,\r)-- (-\r,-\r)
(-\r,-\r)-- (\r,-\r)
(\r,\r)-- (-\r,-\r)
};
 \draw[line width = \lwid, color = Green]{
(\r,-\r)--(3*\r,-\r)
(3*\r,-\r)--(3*\r,\r)
};
\draw[line width = \cwid, color = black!50]{
(\r,\r)  node[circle, draw, fill=black!10,inner sep=\inse, minimum width=\inci] {}
(\r,-\r) node[circle, draw, fill=black!10,inner sep=\inse, minimum width=\inci] {}
(-\r,\r) node[circle, draw, fill=black!10,inner sep=\inse, minimum width=\inci] {}
(-\r,-\r)  node[circle, draw, fill=black!10,inner sep=\inse, minimum width=\inci] {}
(3*\r,-\r)  node[circle, draw, fill=black!10,inner sep=\inse, minimum width=\inci] {}
(3*\r,\r)  node[circle, draw, fill=black!10,inner sep=\inse, minimum width=\inci] {}
};

\end{tikzpicture}   
\end{subfigure}
\hspace*{5mm}
\begin{subfigure}[b]{0.15\textwidth}
\begin{tikzpicture}[scale=\scale]
\draw[line width = \cwid, color = black!00]{
(\r,\r)  node[circle, draw, fill=black!0,inner sep=\inse, minimum width=\inci] {}
(\r,-\r) node[circle, draw, fill=black!0,inner sep=\inse, minimum width=\inci] {}
(-\r,\r) node[circle, draw, fill=black!0,inner sep=\inse, minimum width=\inci] {}
(-\r,-\r)  node[circle, draw, fill=black!0,inner sep=\inse, minimum width=\inci] {}
(2*\r,-\r)  node[circle, draw, fill=black!0,inner sep=\inse, minimum width=\inci] {}
(2*\r,\r)  node[circle, draw, fill=black!0,inner sep=\inse, minimum width=\inci] {}
};
\end{tikzpicture}
\end{subfigure}
\\
\vspace{3mm}
\hspace*{-15.75mm}
\begin{subfigure}[b]{0.15\textwidth}
\begin{tikzpicture}[scale=\scale]
\draw[line width = \cwid, color = black!0]{
(-2.73*\r,0)  node[circle, draw, fill=black!00,inner sep=\inse, minimum width=\inci] {}
};
\draw[line width = \lwid, color = blue]{

(-\r,-\r)-- (\r,-\r)
(-\r,-\r)-- (0,\r)
(\r,-\r)-- (0,1*\r)
};
 \draw[line width = \lwid, color = Green]{
(2*\r,-\r)--(2*\r,1*\r)
(3*\r,-\r)--(3*\r,1*\r)
};
\draw[line width = \cwid, color = black!50]{
(-\r,-\r)  node[circle, draw, fill=black!10,inner sep=\inse, minimum width=\inci] {}
(\r,-\r) node[circle, draw, fill=black!10,inner sep=\inse, minimum width=\inci] {}
(0,1*\r) node[circle, draw, fill=black!10,inner sep=\inse, minimum width=\inci] {}
(-\r,-\r)  node[circle, draw, fill=black!10,inner sep=\inse, minimum width=\inci] {}
(2*\r,-\r)  node[circle, draw, fill=black!10,inner sep=\inse, minimum width=\inci] {}
(2*\r,1*\r)  node[circle, draw, fill=black!10,inner sep=\inse, minimum width=\inci] {}
(3*\r,-\r)  node[circle, draw, fill=black!10,inner sep=\inse, minimum width=\inci] {}
(3*\r,1*\r)  node[circle, draw, fill=black!10,inner sep=\inse, minimum width=\inci] {}
};
\draw[line width = \cwid, color = black!00]{
(-2.0*\r,-2.0*\r)  node[circle, draw, fill=black!00,inner sep=\inse, minimum width=\inci] {}
};
\end{tikzpicture}
\end{subfigure}
\hspace*{5mm}
\begin{subfigure}[b]{0.15\textwidth}
\begin{tikzpicture}[scale=\scale]
\draw[line width = \cwid, color = black!0]{
(-2.73*\r,0)  node[circle, draw, fill=black!00,inner sep=\inse, minimum width=\inci] {}
};
 \draw[line width = \lwid, color = blue]{

(-\r,-\r)-- (\r,-\r)
(-\r,-\r)-- (0,1*\s)
(\r,-\r)-- (0,1*\r)
};
 \draw[line width = \lwid, color = Green]{
(2*\r,-\r)--(2.5*\r,1*\r)
(3*\r,-\r)--(2.5*\r,1*\r)
};
\draw[line width = \cwid, color = black!50]{
(-\r,-\r)  node[circle, draw, fill=black!10,inner sep=\inse, minimum width=\inci] {}
(\r,-\r) node[circle, draw, fill=black!10,inner sep=\inse, minimum width=\inci] {}
(0,1*\r) node[circle, draw, fill=black!10,inner sep=\inse, minimum width=\inci] {}
(-\r,-\r)  node[circle, draw, fill=black!10,inner sep=\inse, minimum width=\inci] {}
(2*\r,-\r)  node[circle, draw, fill=black!10,inner sep=\inse, minimum width=\inci] {}
(2.5*\r,1*\r)  node[circle, draw, fill=black!10,inner sep=\inse, minimum width=\inci] {}
(3*\r,-\r)  node[circle, draw, fill=black!10,inner sep=\inse, minimum width=\inci] {}
};
\draw[line width = \cwid, color = black!00]{
(-2.0*\r,-2.0*\r)  node[circle, draw, fill=black!00,inner sep=\inse, minimum width=\inci] {}
};
\end{tikzpicture}
\end{subfigure}
\hspace*{4.75mm}
\begin{subfigure}[b]{0.15\textwidth}
\begin{tikzpicture}[scale=\scale]
\draw[line width = \cwid, color = black!0]{
(-2.75*\r,0)  node[circle, draw, fill=black!00,inner sep=\inse, minimum width=\inci] {}
};
 \draw[line width = \lwid, color = blue]{
(\r,\r)-- (-\r,\r)
(-\r,\r)-- (-\r,-\r)
(-\r,-\r)-- (\r,-\r)
(\r,-\r)-- (\r,\r)
(\r,\r)-- (-\r,-\r)
};
 \draw[line width = \lwid, color = Green]{

(-\r,\r)--(-2.73*\r,0)
(\r,-\r)--(2.73*\r,0)
};
\draw[line width = \cwid, color = black!50]{
(\r,\r)  node[circle, draw, fill=black!10,inner sep=\inse, minimum width=\inci] {}
(\r,-\r) node[circle, draw, fill=black!10,inner sep=\inse, minimum width=\inci] {}
(-\r,\r) node[circle, draw, fill=black!10,inner sep=\inse, minimum width=\inci] {}
(-\r,-\r)  node[circle, draw, fill=black!10,inner sep=\inse, minimum width=\inci] {}
(-2.73*\r,0)  node[circle, draw, fill=black!10,inner sep=\inse, minimum width=\inci] {}
(2.73*\r,0)  node[circle, draw, fill=black!10,inner sep=\inse, minimum width=\inci] {}
};
\draw[line width = \cwid, color = black!00]{
(-2.0*\r,-2.0*\r)  node[circle, draw, fill=black!00,inner sep=\inse, minimum width=\inci] {}
};
\end{tikzpicture}
\end{subfigure}
\hspace*{4.75mm}
\begin{subfigure}[b]{0.15\textwidth}
\begin{tikzpicture}[scale=\scale]
\draw[line width = \cwid, color = black!0]{
(-2.73*\r,0)  node[circle, draw, fill=black!00,inner sep=\inse, minimum width=\inci] {}
};
 \draw[line width = \lwid, color = blue]{

(\r,\r)-- (-\r,\r)
(-\r,\r)-- (-\r,-\r)
(-\r,-\r)-- (\r,-\r)
(\r,-\r)-- (\r,\r)
(\r,\r)-- (-\r,-\r)
};
 \draw[line width = \lwid, color = Green]{
[bend left] (\r,-\r) to (-2.0*\r,-2.0*\r)
[bend right] (-\r,\r) to (-2.0*\r,-2.0*\r)
};
\draw[line width = \cwid, color = black!50]{
(\r,\r)  node[circle, draw, fill=black!10,inner sep=\inse, minimum width=\inci] {}
(\r,-\r) node[circle, draw, fill=black!10,inner sep=\inse, minimum width=\inci] {}
(-\r,\r) node[circle, draw, fill=black!10,inner sep=\inse, minimum width=\inci] {}
(-\r,-\r)  node[circle, draw, fill=black!10,inner sep=\inse, minimum width=\inci] {}
(-2.0*\r,-2.0*\r)  node[circle, draw, fill=black!10,inner sep=\inse, minimum width=\inci] {}
};
\end{tikzpicture}
    \end{subfigure}
\hspace*{5mm}
\begin{subfigure}[b]{0.15\textwidth}
\begin{tikzpicture}[scale=\scale]
\draw[line width = \cwid, color = black!0]{
(-2.73*\r,0)  node[circle, draw, fill=black!00,inner sep=\inse, minimum width=\inci] {}
};
\draw[line width = \cwid, color = black!00]{
(\r,\r)  node[circle, draw, fill=black!0,inner sep=\inse, minimum width=\inci] {}
(\r,-\r) node[circle, draw, fill=black!0,inner sep=\inse, minimum width=\inci] {}
(-\r,\r) node[circle, draw, fill=black!0,inner sep=\inse, minimum width=\inci] {}
(-\r,-\r)  node[circle, draw, fill=black!0,inner sep=\inse, minimum width=\inci] {}
(2*\r,-\r)  node[circle, draw, fill=black!0,inner sep=\inse, minimum width=\inci] {}
(2*\r,\r)  node[circle, draw, fill=black!0,inner sep=\inse, minimum width=\inci] {}
};
\end{tikzpicture}
\end{subfigure}
\caption{The 18 exceptions of \cref{c:exceptions} with the largest exceptional subgraph in blue and a cutset $S$ of their coline graph $L$ such that $c(L-S)>|S|$ in green.}
\label{f:exceptions}
\end{figure}
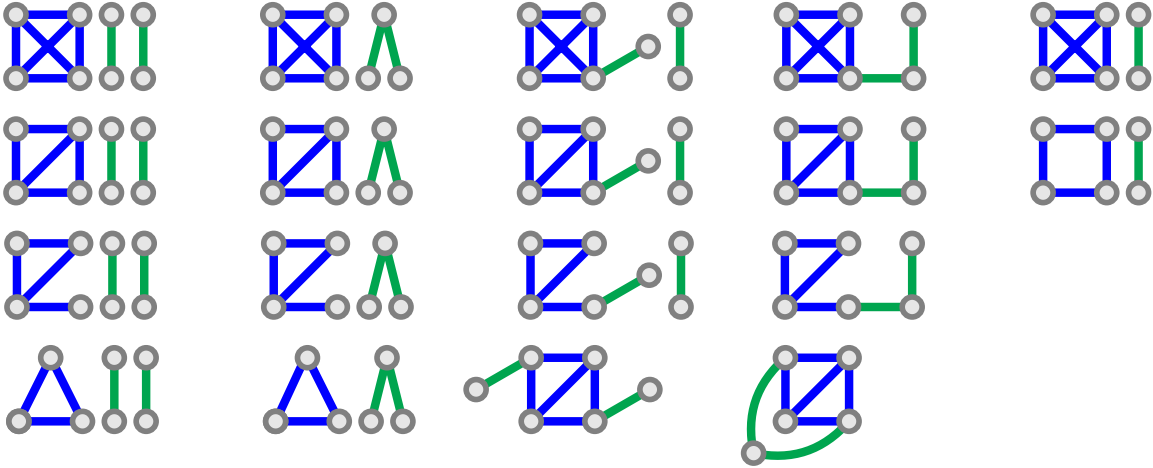

Now we consider traceability of coline graphs. 
We begin by proving \cref{c:main}, for which we need the following result by Whitney~\cite{Wh32}. 
\begin{thm}[\cite{Wh32}]\label{t:Whit}
Let $G_1$ and $G_2$ be distinct connected graphs whose line graphs are isomorphic. Then 
$\{G_1,G_2\} = \{K_3,K_{1,3}\}$.
\end{thm}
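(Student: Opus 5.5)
The plan is to take an isomorphism $\varphi\colon L(G_1)\to L(G_2)$, which is an adjacency-preserving bijection $E(G_1)\to E(G_2)$. I will show that, unless one of the graphs is $K_3$ or $K_{1,3}$, $\varphi$ sends the edge sets of stars of $G_1$ to edge sets of stars of $G_2$. This yields a vertex bijection $V(G_1)\to V(G_2)$ inducing $\varphi$, so $G_1\cong G_2$. Here ``distinct'' is read as ``non-isomorphic''.

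First I dispose of small cases. If $L(G_1)$ has at most $3$ vertices, the connected graphs with at most $3$ edges can be listed by hand: $K_2$, $P_3$, $P_4$, $K_3$, $K_{1,3}$. Only $K_3$ and $K_{1,3}$ share a line graph, namely $K_3$. So assume both graphs have at least $4$ edges and are connected. A clique of $L(G)$ corresponds to a set of pairwise adjacent edges of $G$, hence to a subset of a star or to a triangle of $G$.

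The key step is to distinguish these two kinds of cliques intrinsically in $L(G)$. For a triangle $T$ of $L(G)$, call $T$ \emph{odd} if some vertex of $L(G)$ outside $T$ is adjacent to an odd number of vertices of $T$. If $T$ comes from a triangle $abc$ of $G$, every other edge meets exactly $0$ or $2$ of its edges, so $T$ is even. If $T$ comes from three edges $va,vb,vc$ of a star, then because $G$ is connected with at least $4$ edges there is a fourth edge meeting the star. It meets $1$ of the three edges if it is not at $v$, and $3$ if it is at $v$, so $T$ is odd. Oddness is preserved by $\varphi$. Hence $\varphi$ maps star-triples to star-triples and triangle-triples to triangle-triples.

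Next I define the vertex map. For a vertex $v$ with $\deg v\ge 2$, the star $E_v$ is a clique of $L(G_1)$ all of whose triples are odd. Its image is therefore a clique of $L(G_2)$ all of whose triples are odd. A clique of size at least $3$ with all triples odd must lie in a single star: if its edges formed a triangle, the triple would be even. For $|E_v|=2$, I will argue via an adjacent third edge or a short case check.

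So $\varphi(E_v)\subseteq E_w$ for a unique $w$. Applying the same argument to $\varphi^{-1}$ gives equality, and setting $f(v)=w$ extends to leaves using the unique incident edge. I then verify that $f$ is a bijection and that $uv\in E(G_1)$ iff $f(u)f(v)\in E(G_2)$, since $uv=E_u\cap E_v$ maps to $E_{f(u)}\cap E_{f(v)}$.

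The main obstacle I expect is the degree-$2$ stars and leaves in small graphs such as $K_4$, $K_4^-$ and $K_3^+$. In these graphs the map $\varphi$ genuinely need not be induced by a vertex map, although the graphs are still isomorphic. There I would fall back on a direct check of connected graphs with $4$ to $6$ edges containing a triangle, rather than force the general argument.
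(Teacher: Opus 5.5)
The paper does not prove this statement; it only cites Whitney. Your proposal therefore has to stand on its own. Its route (odd triangles, then stars map to stars, then an induced vertex bijection) is the standard one, but the key lemma is false as you state it.

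You claim that a fourth edge meeting the star $\{va,vb,vc\}$ but not incident with $v$ meets exactly one of the three edges. This overlooks an edge joining two leaves, such as $ab$, which meets both $va$ and $vb$. So star triples can be even in connected graphs with at least $4$ edges:
in $K_3^+$ (triangle $vab$ plus pendant edge $vc$) the only other edge is $ab$, so $\{va,vb,vc\}$ is even;
the star triples at the degree-$3$ vertices of $K_4^-$ are even;
in $L(K_4)$, the octahedron, every triangle is even.

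In these graphs a line-graph isomorphism really can send a star triple to a triangle triple. For example, the automorphism of the diamond $L(K_3^+)$ that swaps $ab$ and $vc$ sends $\{va,vb,vc\}$ to $\{va,vb,ab\}$. So the step ``oddness is preserved, hence $\varphi(E_v)$ lies in a single star'' fails exactly there. The correct lemma is: for connected $G$, the triple $\{va,vb,vc\}$ is even if and only if $\deg v=3$ and $V(G)=\{v,a,b,c\}$, i.e.\ $G\in\{K_{1,3},K_3^+,K_4^-,K_4\}$.

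With this correction your argument works, but only when \emph{neither} $G_1$ nor $G_2$ is one of $K_3^+,K_4^-,K_4$. You need odd star triples in $G_1$ to get $\varphi(E_v)\subseteq E_w$, and odd star triples in $G_2$ to run the argument for $\varphi^{-1}$ and get equality. If both graphs are exceptional, equal edge counts force $G_1=G_2$.

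The mixed case, where exactly one graph is exceptional, must also be excluded. Your fallback of checking graphs ``containing a triangle'' does not obviously cover it, since the non-exceptional graph need not contain a triangle. One clean way to handle it:
all triangles of $L(K_3^+)$, $L(K_4^-)$, $L(K_4)$ are even, and there are $2$, $4$, $8$ of them;
for a non-exceptional connected $G$ with at least $4$ edges, the even triangles of $L(G)$ correspond bijectively to the triangles of $G$;
a graph with $4$, $5$, $6$ edges has at most $1$, $2$, $4$ triangles.

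Finally, the degree-$2$ stars, which you flag as the main obstacle, are not one. Suppose $\deg v=2$, so $\varphi(E_v)\subseteq E_w$ for the common vertex $w$ of the two image edges, and suppose $\deg w\ge 3$. Then $\varphi^{-1}(E_w)$ lies in a single star $E_u$ with $|E_u|\ge 3$. Since $E_u$ contains both edges of $E_v$, we get $u=v$, contradicting $\deg v=2$. Hence $\varphi(E_v)=E_w$.
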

\begin{proof}[Proof of \cref{c:main}]
Let $G$ be a graph with a pseudo-tough coline graph $L$. Then $L^*$ is tough and so, by \cref{t:main}, $L^*$ is Hamiltonian unless it is $\co(K_5)$ or $\co(H_i)$ for some $i\in [3]$.
So as $L$ is traceable if and only if $L^*$ is Hamiltonian, $L$ is traceable unless it is either $\co(K_5)$ or $\co(H_i)$ for some $i\in [3]$ with a dominating vertex removed. As $\co(K_5)$, $\co(H_1)$ and $\co(H_2)$ do not contain a dominating vertex, $L$ must be $\co(H_3)$ with its unique dominating vertex removed, which is $\co(K_3 \circ K_1)=K_3 \circ K_1$. 
Since $L=\co(K_3 \circ K_1)$, we have that $\overline{L} = L(K_3 \circ K_1)$. It is easy to check that $L(K_3 \circ K_1)$ is connected. Any graph $H$ with $\co(H)=L$ satisfies $L(H)=\overline{L}$. Given that $H$ has no trivial components, the only way $L(H)=\overline{L}$ can be connected is if $H$ is connected. Thus since $\overline{L}$ is not isomorphic to $K_3$, 
Whitney's Theorem implies that $H \cong K_3 \circ K_1$.
\end{proof}

We also provide a characterisation of non-traceable graphs similar to \cref{c:exceptions}.
\begin{cor}\label{c:Trace}
Let $G$ be a graph with $m$ edges and maximum degree $\Delta$. Then $\co(G)$ is not traceable if and only if it satisfies one of the following
\begin{itemize}
\item[(i)] $m < 2\Delta-1$;
\item[(ii)] $m=2\Delta-1$ and two vertices of degree $\Delta$ are adjacent in $G$;
\item[(iii)] $G$ is one of the graphs in \cref{f:exceptionsTrace};
\item[(iv)] $G$ is $K_3\circ K_1$.
\end{itemize}
\end{cor}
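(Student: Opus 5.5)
The plan is to reduce to \cref{c:exceptions} through the correspondence $L^*=\co(G\cup K_2)$, where $L=\co(G)$, and then use \cref{c:main}. Put $G^*=G\cup K_2$. It has $m+1$ edges, and its maximum degree is $\Delta^*=\max\{\Delta,1\}=\Delta$ since $G$ has no trivial components. We know $L$ is traceable if and only if $L^*$ is Hamiltonian.

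For the forward implication, suppose $L$ is not traceable.
\begin{itemize}
\item If $L$ is pseudo-tough, then \cref{c:main} forces $G=K_3\circ K_1$, which is (iv).
\item Otherwise $L^*=\co(G^*)$ is not tough, and \cref{c:exceptions} applies to $G^*$. Its condition (i), $m+1<2\Delta$, becomes $m<2\Delta-1$, which is (i) here.
\item Its condition (ii) becomes $m=2\Delta-1$ with two adjacent vertices of degree $\Delta$. This is (ii) here, because the added $K_2$ has degree-$1$ vertices. The case $\Delta=1$ is degenerate: then $G$ is a matching, $m<2\Delta-1$ fails, and I will check it by hand.
\item In its case (iii), $G^*$ is one of the 18 graphs of \cref{f:exceptions} that has an isolated $K_2$ component, and $G$ is obtained by deleting that component. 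I expect the collection of these deletions, over all choices of isolated edge, to be exactly what \cref{f:exceptionsTrace} depicts.
\end{itemize}
One wrinkle remains. Such a $G$ could still have traceable $L$ even though $L^*$ is not tough, and that cannot happen, because non-tough implies non-Hamiltonian. So these graphs genuinely are non-traceable, consistent with listing them.

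For the reverse implication:
\begin{itemize}
\item In cases (i) and (ii), $G^*$ satisfies (i) or (ii) of \cref{c:exceptions}. So $L^*$ is not tough, hence not Hamiltonian, and therefore $L$ is not traceable.
\item In case (iii), $G\cup K_2$ is one of the graphs of \cref{f:exceptions}, so the same argument applies.
\item In case (iv), $L=K_3\circ K_1$, which is visibly non-traceable: its three pendant vertices cannot all lie on a single path, since a path has only two ends.
\end{itemize}

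The main obstacle is the bookkeeping in case (iii) of the forward direction. I need to verify that the graphs $G^*$ from \cref{f:exceptions} with an isolated edge, after deleting it, give precisely the list in \cref{f:exceptionsTrace}, and that no resulting $G$ already satisfies (i) or (ii) here. If one did, it would just be redundant and harmless. I also need to handle the edge cases where $G$ itself is small or has components making $\Delta$ ambiguous, such as $\Delta\le 1$ or $G$ empty; these I would check by direct inspection.
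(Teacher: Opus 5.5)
Your proposal is correct and follows the paper's proof essentially step by step. You pass to $L^*=\co(G\cup K_2)$, invoke \cref{c:main} when $L$ is pseudo-tough and \cref{c:exceptions} otherwise, and obtain \cref{f:exceptionsTrace} by deleting the isolated edge from the graphs of \cref{f:exceptions}. The one difference is case (iv) of the reverse implication: your direct argument, that $\co(K_3\circ K_1)=K_3\circ K_1$ has three pendant vertices, is slightly more self-contained than the paper's appeal to \cref{c:main}, since the statement of \cref{c:main} alone does not assert that $\co(K_3\circ K_1)$ is non-traceable.
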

\begin{proof}
Let $L=\co(G)$ and first consider the reverse implication.
If $G$ satisfies (i), (ii) or (iii), then $G \cup K_2$ satisfies (i), (ii) or (iii) of \cref{c:exceptions}, respectively since $|E(G \cup K_2)|=|E(G)|+1$. Then $\co(G\cup K_2)=L^*$ is not tough and so is not Hamiltonian, from which it follows that $L$ is not traceable. If $G = K_3 \circ K_1$, then by \cref{c:main}, $G$ is not traceable.

Now we consider the forward implication and suppose that $\co(G)$ is not traceable.
Then by \cref{c:main}, $L$ is not pseudo-tough or $G$ is $K_3\circ K_1$; the latter implies (iv) is true so suppose the former is true. As $L$ is pseudo-tough if and only if $L^*$ is tough, $L^*$ is not tough and \cref{c:exceptions} implies that the coline root graph $G \cup K_2$ of $L^*$ satisfies one of (i)-(iii) of \cref{c:exceptions}. 
If $G'$ satisfies (i) or (ii) of  \cref{c:exceptions} then clearly $G$ must satisfy (i) or (ii) of the corollary, respectively.
If $G'$ is one of the graphs in \cref{f:exceptions} it must have an isolated edge and so $G$ must be one of the graphs in  \cref{f:exceptionsTrace}.
\end{proof}

\begin{figure}[htbp]
\centering 
\begin{subfigure}[b]{0.15\textwidth}
\begin{tikzpicture}[scale=\scale]
 \draw[line width = \lwid, color = blue]{

(\r,\r)-- (-\r,\r)
(-\r,\r)-- (-\r,-\r)
(-\r,-\r)-- (\r,-\r)
(\r,-\r)-- (\r,\r)
(\r,\r)-- (-\r,-\r)
(-\r,\r)--(\r,-\r)
};
 \draw[line width = \lwid, color = Green]{
(2*\r,-\r)--(2*\r,\r)
};
\draw[line width =\cwid, color = black!50]{
(\r,\r)  node[circle, draw, fill=black!10,inner sep=\inse, minimum width=\inci] {}
(\r,-\r) node[circle, draw, fill=black!10,inner sep=\inse, minimum width=\inci] {}
(-\r,\r) node[circle, draw, fill=black!10,inner sep=\inse, minimum width=\inci] {}
(-\r,-\r)  node[circle, draw, fill=black!10,inner sep=\inse, minimum width=\inci] {}
(2*\r,-\r)  node[circle, draw, fill=black!10,inner sep=\inse, minimum width=\inci] {}
(2*\r,\r)  node[circle, draw, fill=black!10,inner sep=\inse, minimum width=\inci] {}
};
\draw[line width = \cwid, color = black!00]{
(2.73*\r,0) node[circle, draw, fill=black!0,inner sep=\inse, minimum width=\inci] {}
};
\end{tikzpicture}
\end{subfigure}
\begin{subfigure}[b]{0.15\textwidth}
\begin{tikzpicture}[scale =\scale]
 \draw[line width = \lwid, color = blue]{

(\r,\r)-- (-\r,\r)
(-\r,\r)-- (-\r,-\r)
(-\r,-\r)-- (\r,-\r)
(\r,-\r)-- (\r,\r)
(\r,\r)-- (-\r,-\r)
(-\r,\r)--(\r,-\r)
};
 \draw[line width = \lwid, color = Green]{
(\r,-\r)--(2.73*\r,0)
};
\draw[line width = \cwid, color = black!50]{
(\r,\r)  node[circle, draw, fill=black!10,inner sep=\inse, minimum width=\inci] {}
(\r,-\r) node[circle, draw, fill=black!10,inner sep=\inse, minimum width=\inci] {}
(-\r,\r) node[circle, draw, fill=black!10,inner sep=\inse, minimum width=\inci] {}
(-\r,-\r)  node[circle, draw, fill=black!10,inner sep=\inse, minimum width=\inci] {}
(2.73*\r,0) node[circle, draw, fill=black!10,inner sep=\inse, minimum width=\inci] {}
};
\end{tikzpicture}
\end{subfigure}
\begin{subfigure}[b]{0.15\textwidth}
\begin{tikzpicture}[scale=\scale]
 \draw[line width = \lwid, color = blue]{

(\r,\r)-- (-\r,\r)
(-\r,\r)-- (-\r,-\r)
(-\r,-\r)-- (\r,-\r)
(\r,-\r)-- (\r,\r)
(\r,\r)-- (-\r,-\r)
(-\r,\r)--(\r,-\r)
};
 \draw[line width = \lwid, color = Green]{
};
\draw[line width = \cwid, color = black!50]{
(\r,\r)  node[circle, draw, fill=black!10,inner sep=\inse, minimum width=\inci] {}
(\r,-\r) node[circle, draw, fill=black!10,inner sep=\inse, minimum width=\inci] {}
(-\r,\r) node[circle, draw, fill=black!10,inner sep=\inse, minimum width=\inci] {}
(-\r,-\r) node[circle, draw, fill=black!10,inner sep=\inse, minimum width=\inci] {}
};
\end{tikzpicture}
\end{subfigure}
\hspace*{-7.5mm} 
\begin{subfigure}[b]{0.15\textwidth}
\begin{tikzpicture}[scale=\scale]
 \draw[line width = \lwid, color = blue]{

(\r,\r)-- (-\r,\r)
(-\r,\r)-- (-\r,-\r)
(-\r,-\r)-- (\r,-\r)
(\r,-\r)-- (\r,\r)
(\r,\r)-- (-\r,-\r)
};
 \draw[line width = \lwid, color = Green]{
(2*\r,-\r)--(2*\r,\r)
};
\draw[line width = \cwid, color = black!50]{
(\r,\r)  node[circle, draw, fill=black!10,inner sep=\inse, minimum width=\inci] {}
(\r,-\r) node[circle, draw, fill=black!10,inner sep=\inse, minimum width=\inci] {}
(-\r,\r) node[circle, draw, fill=black!10,inner sep=\inse, minimum width=\inci] {}
(-\r,-\r)  node[circle, draw, fill=black!10,inner sep=\inse, minimum width=\inci] {}
(2*\r,-\r)  node[circle, draw, fill=black!10,inner sep=\inse, minimum width=\inci] {}
(2*\r,\r)  node[circle, draw, fill=black!10,inner sep=\inse, minimum width=\inci] {}
};
\end{tikzpicture}
\end{subfigure}
\begin{subfigure}[b]{0.15\textwidth}
\begin{tikzpicture}[scale =\scale]
 \draw[line width = \lwid, color = blue]{

(\r,\r)-- (-\r,\r)
(-\r,\r)-- (-\r,-\r)
(-\r,-\r)-- (\r,-\r)
(\r,-\r)-- (\r,\r)
(\r,\r)-- (-\r,-\r)
};
 \draw[line width = \lwid, color = Green]{
(\r,-\r)--(2.73*\r,0)
};
\draw[line width = \cwid, color = black!50]{
(\r,\r)  node[circle, draw, fill=black!10,inner sep=\inse, minimum width=\inci] {}
(\r,-\r) node[circle, draw, fill=black!10,inner sep=\inse, minimum width=\inci] {}
(-\r,\r) node[circle, draw, fill=black!10,inner sep=\inse, minimum width=\inci] {}
(-\r,-\r)  node[circle, draw, fill=black!10,inner sep=\inse, minimum width=\inci] {}
(2.73*\r,0) node[circle, draw, fill=black!10,inner sep=\inse, minimum width=\inci] {}
};
\end{tikzpicture}
\end{subfigure}
\\
\vspace*{3mm}
\begin{subfigure}[b]{0.15\textwidth}
\begin{tikzpicture}[scale=\scale]
 \draw[line width = \lwid, color = blue]{
(\r,\r)-- (-\r,\r)
(-\r,\r)-- (-\r,-\r)
(-\r,-\r)-- (\r,-\r)
(\r,\r)-- (-\r,-\r)
};
 \draw[line width = \lwid, color = Green]{
(2*\r,-\r)--(2*\r,\r)
};
\draw[line width = \cwid, color = black!50]{
(\r,\r)  node[circle, draw, fill=black!10,inner sep=\inse, minimum width=\inci] {}
(\r,-\r) node[circle, draw, fill=black!10,inner sep=\inse, minimum width=\inci] {}
(-\r,\r) node[circle, draw, fill=black!10,inner sep=\inse, minimum width=\inci] {}
(-\r,-\r)  node[circle, draw, fill=black!10,inner sep=\inse, minimum width=\inci] {}
(2*\r,-\r)  node[circle, draw, fill=black!10,inner sep=\inse, minimum width=\inci] {}
(2*\r,\r)  node[circle, draw, fill=black!10,inner sep=\inse, minimum width=\inci] {}

};
\draw[line width = \cwid, color = black!00]{
(2.73*\r,0) node[circle, draw, fill=black!0,inner sep=\inse, minimum width=\inci] {}
};
\end{tikzpicture}
\end{subfigure}
\begin{subfigure}[b]{0.15\textwidth}
\begin{tikzpicture}[scale =\scale]
 \draw[line width = \lwid, color = blue]{

(\r,\r)-- (-\r,\r)
(-\r,\r)-- (-\r,-\r)
(-\r,-\r)-- (\r,-\r)
(\r,\r)-- (-\r,-\r)
};
 \draw[line width = \lwid, color = Green]{
(\r,-\r)--(2.73*\r,0)
};
\draw[line width = \cwid, color = black!50]{
(\r,\r)  node[circle, draw, fill=black!10,inner sep=\inse, minimum width=\inci] {}
(\r,-\r) node[circle, draw, fill=black!10,inner sep=\inse, minimum width=\inci] {}
(-\r,\r) node[circle, draw, fill=black!10,inner sep=\inse, minimum width=\inci] {}
(-\r,-\r)  node[circle, draw, fill=black!10,inner sep=\inse, minimum width=\inci] {}
(2.73*\r,0) node[circle, draw, fill=black!10,inner sep=\inse, minimum width=\inci] {}
};
\end{tikzpicture}
\end{subfigure}
\begin{subfigure}[b]{0.15\textwidth}
\begin{tikzpicture}[scale=\scale]
 \draw[line width = \lwid, color = blue]{

(\r,\r)-- (-\r,\r)
(-\r,\r)-- (-\r,-\r)
(-\r,-\r)-- (\r,-\r)
(\r,-\r)-- (\r,\r)
};
 \draw[line width = \lwid, color = Green]{
};
\draw[line width = \cwid, color = black!50]{
(\r,\r)  node[circle, draw, fill=black!10,inner sep=\inse, minimum width=\inci] {}
(\r,-\r) node[circle, draw, fill=black!10,inner sep=\inse, minimum width=\inci] {}
(-\r,\r) node[circle, draw, fill=black!10,inner sep=\inse, minimum width=\inci] {}
(-\r,-\r)  node[circle, draw, fill=black!10,inner sep=\inse, minimum width=\inci] {}
};
\end{tikzpicture}
\end{subfigure}
\hspace*{-7.5mm}
\begin{subfigure}[b]{0.15\textwidth}
\begin{tikzpicture}[scale=\scale]
\draw[line width = \lwid, color = blue]{

(-\r,-\r)-- (\r,-\r)
(-\r,-\r)-- (0,\r)
(\r,-\r)-- (0,1*\r)
};
 \draw[line width = \lwid, color = Green]{
(2*\r,-\r)--(2*\r,1*\r)
};
\draw[line width = \cwid, color = black!50]{
(-\r,-\r)  node[circle, draw, fill=black!10,inner sep=\inse, minimum width=\inci] {}
(\r,-\r) node[circle, draw, fill=black!10,inner sep=\inse, minimum width=\inci] {}
(0,1*\r) node[circle, draw, fill=black!10,inner sep=\inse, minimum width=\inci] {}
(-\r,-\r)  node[circle, draw, fill=black!10,inner sep=\inse, minimum width=\inci] {}
(2*\r,-\r)  node[circle, draw, fill=black!10,inner sep=\inse, minimum width=\inci] {}
(2*\r,1*\r)  node[circle, draw, fill=black!10,inner sep=\inse, minimum width=\inci] {}
};
\end{tikzpicture}
\end{subfigure}
\begin{subfigure}[b]{0.15\textwidth}
\begin{tikzpicture}[scale =\scale]
 \draw[line width = \lwid, color = blue!0]{

(\r,\r)-- (-\r,\r)
(-\r,\r)-- (-\r,-\r)
(-\r,-\r)-- (\r,-\r)
(\r,-\r)-- (\r,\r)
(\r,\r)-- (-\r,-\r)
};
 \draw[line width = \lwid, color = Green!0]{
(\r,-\r)--(2.73*\r,0)
};
\end{tikzpicture}
\end{subfigure}
\caption{The 9 exceptions of \cref{c:Trace}} 
\label{f:exceptionsTrace}
\end{figure}

\section*{Acknowledgements} The author thanks David Wood for helpful discussions and valuable suggestions that improved both the content and presentation of this paper. \section*{Declarations}

\noindent \textbf{Use of artificial intelligence.} \\
\noindent Generative artificial intelligence tools were used to assist with proofreading, improving exposition, checking consistency of notation and terminology, and obtaining general feedback on the manuscript. All mathematical results, proofs, figures, classifications, and conclusions were developed and verified by the author.

\end{document}